\newtheorem{theorem}{Theorem}[section]
\newtheorem{lemma}[theorem]{Lemma}
\newtheorem{corollary}[theorem]{Corollary}
\newtheorem{conjecture}[theorem]{Conjecture}
\newtheorem{example}[theorem]{Example}
\newtheorem{definition}[theorem]{Definition}
\newtheorem{remark}[theorem]{Remark}
\newtheorem{remarks}[theorem]{Remarks}
\numberwithin{equation}{section}
\def\wt{\widetilde}
\def\notdiv{\nmid}
\def\div{\,\vert\,}
\def\Q{\mathbb{Q}}
\def\Z{\mathbb{Z}}
\def\ov{\overline}
\def\Cl{{\mathcal C}\hskip-2pt{\ell}}
\def\Frac#1#2{\hbox{\footnotesize $\displaystyle \frac{#1}{#2}$}}
\def\plus{\displaystyle\mathop{\raise 2.0pt \hbox{$\bigoplus $}}\limits}
\def\prd{ \displaystyle\mathop{\raise 2.0pt \hbox{$\prod$}}\limits}
\def\sm{  \displaystyle\mathop{\raise 2.0pt \hbox{$\sum$}}\limits}
\let\sst=\scriptscriptstyle
\let\ds=\displaystyle
\def\lien{\mathrel{\mkern-4mu}}
\def\too{\relbar\lien\rightarrow}
\def\tooo{\relbar\lien\relbar\lien\too}
\def\toooo{\relbar\lien\relbar\lien\relbar\lien\too}
\def\order{\raise1.5pt \hbox{${\scriptscriptstyle \#}$}}
\begin{document}
 
\title[$p$-rationality of number fields]
{On $p$-rationality of number fields \\ Applications -- PARI/GP programs}

\author{Georges Gras}

\date{ To appear in Publ. Math. Fac. Sci. Besan\c con (2019)} 

\address{Villa la Gardette, Chemin Ch\^ateau Gagni\`ere 
 F--38520 Le Bourg d'Oisans, France 
{\rm \url{https://www.researchgate.net/profile/Georges_Gras}}}
\email{g.mn.gras@wanadoo.fr}

\begin{abstract} Let $K$ be a number field. We prove that its 
ray class group modulo $p^2$ (resp. $8$) if $p>2$ (resp. $p=2$) 
characterizes its $p$-rationality. Then we give two short,
very fast PARI Programs
(\S\S\,\ref{prog1}, \ref{prog2}) testing if $K$ (defined by an irreducible 
monic polynomial) is $p$-rational or not. 
For quadratic fields we verify some densities related to 
Cohen--Lenstra--Martinet ones and analyse Greenberg's 
conjecture on the existence of $p$-rational fields 
with Galois groups $(\Z/2\Z)^t$ needed for the construction of some 
Galois representations with open image. We give examples for 
$p=3$, $t=5$ and $t=6$ (\S\S\,\ref{real}, \ref{imaginary}) and illustrate 
other approaches (Pitoun--Varescon, Barbulescu--Ray). 
We conclude about the existence of imaginary quadratic fields, 
$p$-rational for all $p\geq 2$ (Angelakis--Stevenhagen on the 
concept of ``minimal absolute abelian Galois group'') which 
may enlighten a conjecture of $p$-rationality (Hajir--Maire) giving 
large Iwasawa $\mu$-invariants of some uniform pro-$p$-groups.

\medskip\noindent
{\sc R\'esum\'e} Soit $K$ un corps de nombres. Nous montrons que son
corps de classes de rayon modulo $p^2$ (resp. $8$) si $p>2$ (resp. $p=2$)
caract\'erise sa $p$-rationalit\'e. Puis nous donnons deux courts programmes
PARI (\S\S\,\ref{prog1}, \ref{prog2}) trs rapides  testant si $K$ 
(d\'efini par un polyn\^ome irr\'eductible unitaire) est $p$-rationnel ou non.
Pour les corps quadratiques nous v\'erifions certaines densit\'es en relation avec
celles de Cohen--Lenstra--Martinet et nous analysons la conjecture de 
Greenberg sur l'existence de corps $p$-rationnels de groupes de Galois 
$(\Z/2\Z)^t$ n\'ecessaires pour la construction de certaines repr\'esentations 
galoisiennes d'image ouverte. Nous donnons des exemples pour
$p=3$, $t=5$ et $t=6$ (\S\S\,\ref{real}, \ref{imaginary}) et illustrons d'autres
approches (Pitoun--Varescon, Barbulescu--Ray).
Nous concluons sur l'existence de corps quadratiques imaginaires 
$p$-rationnels pour tout $p\geq 2$ (Angelakis--Stevenhagen sur le concept
de  ``groupe de Galois ab\'elien absolu minimal'') qui peut \'eclairer une conjecture
de $p$-rationalit\'e (Hajir--Maire) donnant de grands invariants $\mu$ d'Iwasawa
relatifs \`a certains pro-$p$-groupes uniformes.
\end{abstract}

\maketitle

\section{Definition and properties of $p$-rationality}\label{def}

Let $K$ be a number field and let $p\geq 2$ be a fixed prime number. 
We denote by $\Cl_K$ the $p$-class group of $K$ in the ordinary sense
and by $E_K$ the group of $p$-principal global units $\varepsilon \equiv 1 
\pmod {\prod_{{\mathfrak p} \mid p} {\mathfrak p}}$ of $K$.

\smallskip
Let us describe the diagram of the so called {\it abelian $p$-ramification theory}
(from \cite[\S\,III.2 (c), \!Fig.~2.2]{Gr1}, \cite[Section 3]{Gr2}),
in which $\wt K$ is the compositum of the $\Z_p$-extensions of $K$, 
$H_K$ the $p$-Hilbert class field, $H_K^{\rm pr}$ the 
maximal abelian $p$-ramified (i.e., unramified outside~$p$)
pro-$p$-extension of $K$, then ${\rm Gal}(H_K^{\rm bp} / \wt K)$ is the 
Bertrandias--Payan module \cite[\S\,2, Diagramme 4]{Ja}.

\smallskip
See \cite{BJ} and \cite[Diagram 2]{Ja}  for 
a related context with logarithmic class groups.

\medskip
Let 
$$U_K:=\ds\plus_{{\mathfrak p} \div p} U_{\mathfrak p}^1$$
be the $\Z_p$-module of $p$-principal 
local units of $K$, where each 
$$U_{\mathfrak p}^1
:=\{u \in K_{\mathfrak p}^\times, \ u \equiv 1 \pmod {\ov {\mathfrak p}} \}$$ 
is the group of $\ov{\mathfrak p}$-principal units of the 
completion $K_{\mathfrak p}$ of $K$ at ${\mathfrak p} \mid p$,
where $\ov {\mathfrak p}$ is the maximal ideal of the ring of integers 
of $K_{\mathfrak p}$.
For any field $k$, let $\mu^{}_k$ be the $p$-group of roots of unity of $k$.

\smallskip
Then put 
$$\hbox{$W_K := {\rm tor}^{}_{\Z_p} \big (U_K  \big )  = \ds
\plus_{{\mathfrak p} \div p} \mu^{}_{K_{\mathfrak p}}\ $ and 
$\  {\mathcal W}_K := W_K /\mu^{}_K$. }$$
Let $\overline E_K$ be the closure in $U_K$ of the diagonal image 
of $E_K$; this gives in the diagram 
$${\rm Gal}(H_K^{\rm pr}/H_K) \simeq U_K/\overline E_K:$$
\unitlength=1.0cm 
$$\vbox{\hbox{\hspace{-2.8cm} 
 \begin{picture}(11.5,5.8)
\put(6.5,4.50){\line(1,0){1.3}}
\put(8.75,4.50){\line(1,0){2.0}}
\put(3.85,4.50){\line(1,0){1.4}}
\put(9.1,4.15){\footnotesize$\simeq\! {\mathcal W}_K$}
\put(4.25,2.50){\line(1,0){1.25}}

\bezier{350}(3.8,4.8)(7.6,5.8)(11.0,4.8)
\put(7.2,5.45){\footnotesize${\mathcal T}_K$}

\put(3.50,2.9){\line(0,1){1.25}}
\put(3.50,0.9){\line(0,1){1.25}}
\put(5.7,2.9){\line(0,1){1.25}}

\bezier{300}(3.9,0.5)(4.7,0.5)(5.6,2.3)
\put(5.2,1.3){\footnotesize$\simeq \! \Cl_K$}
\put(4.1,4.15){\footnotesize$\simeq\! \Cl_K^\infty$}

\bezier{300}(6.3,2.5)(8.5,2.6)(10.8,4.3)
\put(8.0,2.6){\footnotesize$\simeq \! U_K/\ov E_K$}

\bezier{500}(3.9,0.4)(9,0.4)(10.95,4.3)
\put(8.5,1.2){\footnotesize${\mathcal A}_K$}

\put(10.85,4.4){$H_K^{\rm pr}$}
\put(5.5,4.4){$\widetilde K\! H_K$}
\put(8.0,4.4){$H_K^{\rm bp}$}
\put(6.7,4.14){\footnotesize$\simeq\! {\mathcal R}_K$}
\put(3.3,4.4){$\widetilde K$}
\put(5.55,2.4){$H_K$}
\put(2.8,2.4){$\widetilde K \!\cap \! H_K$}
\put(3.4,0.40){$K$}
\end{picture}   }} $$
\unitlength=1.0cm

Put (\cite[Chapter III, \S\,(b) \& Theorem 2.5]{Gr1}):
$${\mathcal T}_K :=  {\rm  tor}^{}_{\Z_p}({\rm Gal}(H_K^{\rm pr}/K)); $$
in the viewpoint of Artin symbol, ${\mathcal T}_K $ is the kernel
of the ``logarithmic'' map:
$${\mathcal A}_K \mathop {\toooo}^{{\rm Art}^{-1}} {\mathcal I}_K/ {\mathcal P}_{K,\infty} 
 \mathop {\toooo}^{{\rm Log}} \Z_p {\rm Log}(I_K) \subseteq 
\Big( \plus_{{\mathfrak p} \div p} K_{\mathfrak p}\Big) \Big / \Q_p\, {\rm log}(E_K),$$

where ${\mathcal I}_K := I_K \otimes \Z_p$, $I_K$ being the group 
of prime to $p$ ideals of $K$, and ${\mathcal P}_{K,\infty}$ the group of infinitesimal 
principal ideals; ${\rm log}$ is the $p$-adic logarithm, and 
${\rm Log}({\mathfrak a}) := \frac{1}{m}\, {\rm log}(\alpha) \!\!\pmod {\Q_p\, {\rm log}(E_K)}$ 
for any relation  in $I_K$ of the form:
$$\hbox{${\mathfrak a}^m=(\alpha)$, $\ m\in \Z,\ \alpha \in K^\times$.} $$
Let $\Cl_K^\infty$ be the subgroup of $\Cl_K$ corresponding, by class field theory, 
to ${\rm Gal}(H_K/\wt K \cap H_K)$.

\smallskip
We have, under the Leopoldt conjecture, 
the exact sequence defining ${\mathcal R}_K$
(\cite[Lemma III.4.2.4]{Gr1} or \cite[Lemma 3.1 \& \S\,5]{Gr2}):
\begin{equation*}
1\to  {\mathcal W}_K  \too 
 {\rm tor}_{\Z_p}^{} \big(U_K \big / \ov E_K \big) \\
 \mathop {\tooo}^{{\rm log}}  {\rm tor}_{\Z_p}^{}\big({\rm log}\big 
(U_K \big) \big / {\rm log} (\ov E_K) \big)=: {\mathcal R}_K \to 0. 
\end{equation*}

The group ${\mathcal R}_K$ is called the {\it normalized $p$-adic 
regulator of $K$} and makes sense for any number field.

\begin{definition} The field $K$ is said to be $p$-rational if 
the Leopoldt conjecture is satisfied for $p$ in $K$ and if
the torsion group ${\mathcal T}_K$ is trivial.
\end{definition}

\begin{theorem} \label{cns} 
(\cite[Th\'eor\`eme et D\'efinition 4.1]{GJ} or \cite[Theorem IV.3.5]{Gr1}).
For  a number field $K$, each of the following properties
is equivalent to its $p$-rationality (where $2\,r_2$ is the 
number of complex embeddings of $K$):

\smallskip
(i) ${\mathcal A}_K := {\rm Gal}(H_K^{\rm pr}/K) \simeq \Z_p^{r_2+1}$,  

\smallskip
(ii) the Galois group ${\mathcal G}_K$ of the maximal $p$-ramified  
pro-$p$-extension of $K$ is a free pro-$p$-group on 
$r^{}_2+1$ generators (i.e., ${\rm H}^2({\mathcal G}_K, \Z/p\,\Z)=1$),

\smallskip
(iii) we have the following alternative:

\smallskip
\quad $\ \bullet\ $ either $\mu_p^{} \subset K$, the set of $p$-places of $K$
is a singleton $\{{\mathfrak p}\}$, and ${\mathfrak p}$ generates the 
$p$-class group of $K$ (in the restricted sense for $p=2$),

\smallskip
\quad $\ \bullet\ $ or $\mu_p^{} \not \subset K$ (whence $p \ne 2$), no prime ideal 
${\mathfrak p} \mid p$ of $K$ is totally split in $K(\mu_p^{})/K$
and the $\omega$-components of the $p$-classes
of the ${\mathfrak P} \mid p$ in $K(\mu_p^{})$ generate the 
$\omega$-component of the $p$-class group of $K(\mu_p^{})$, where $\omega$
is the Teichm\"uller character.
\end{theorem}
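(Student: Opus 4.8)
The plan is to prove the equivalences by exploiting the fundamental diagram and exact sequence already set up in the excerpt. The field $K$ is $p$-rational precisely when ${\mathcal T}_K$ is trivial (assuming Leopoldt). Since ${\mathcal T}_K = {\rm tor}_{\Z_p}({\rm Gal}(H_K^{\rm pr}/K))$ and ${\mathcal A}_K = {\rm Gal}(H_K^{\rm pr}/K)$, and since the diagram shows that ${\mathcal A}_K$ sits in an extension whose free part has $\Z_p$-rank equal to $r_2+1$ (this rank being governed by Leopoldt's conjecture, which guarantees the $\Z_p$-rank of the Galois group of the compositum of $\Z_p$-extensions is exactly $r_2+1$), the statement (i) that ${\mathcal A}_K \simeq \Z_p^{r_2+1}$ is simply the assertion that the torsion ${\mathcal T}_K$ vanishes.

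So let me think about proving each equivalence:

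The plan is to establish the chain (i) $\iff$ (ii) $\iff$ $p$-rationality directly from the structure theory, then treat (iii) as a class-field-theoretic decoding of the vanishing of ${\mathcal T}_K$. The key structural fact, valid under Leopoldt, is that ${\rm Gal}(H_K^{\rm pr}/K)$ is a finitely generated $\Z_p$-module whose free rank is exactly $r_2+1$; this is what the $\Z_p$-extension $\wt K$ contributes in the diagram. Hence ${\mathcal A}_K \simeq \Z_p^{r_2+1}$ if and only if its torsion submodule ${\mathcal T}_K$ is zero, which is exactly $p$-rationality by definition. This gives (i) $\iff$ $p$-rationality immediately.

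First I would prove (i) $\iff$ (ii). Here the bridge is standard cohomology of profinite groups: the maximal $p$-ramified pro-$p$-extension has Galois group ${\mathcal G}_K$ whose abelianization ${\mathcal G}_K^{\rm ab}$ is precisely ${\mathcal A}_K$. The minimal number of generators of ${\mathcal G}_K$ equals $\dim_{\F_p} {\rm H}^1({\mathcal G}_K, \Z/p\Z)$, and the number of relations equals $\dim_{\F_p} {\rm H}^2({\mathcal G}_K, \Z/p\Z)$. One computes the generator rank from ${\mathcal G}_K^{\rm ab}/p$ and uses the Euler-characteristic formula for $p$-ramified pro-$p$-groups (a consequence of global class field theory and the computation of the relevant Galois cohomology), which under Leopoldt forces the generator number to be $r_2+1$ and ties freeness to ${\rm H}^2 = 0$. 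The equivalence ${\rm H}^2({\mathcal G}_K,\Z/p\Z) = 1 \iff {\mathcal G}_K$ free on $r_2+1$ generators is then the Burnside-basis / presentation criterion for pro-$p$-groups, and freeness on the correct number of generators is equivalent to ${\mathcal A}_K \simeq \Z_p^{r_2+1}$.

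Next I would derive (iii) by unwinding the Artin symbol description of ${\mathcal T}_K$ as the kernel of the logarithmic map displayed above, or equivalently by reading the diagram: triviality of ${\mathcal T}_K$ is equivalent to the simultaneous triviality of ${\mathcal R}_K$, of ${\mathcal W}_K$, and of the contribution of $\Cl_K^\infty$. The exact sequence defining ${\mathcal R}_K$ shows $1 \to {\mathcal W}_K \to {\rm tor}_{\Z_p}(U_K/\ov E_K) \to {\mathcal R}_K \to 0$, so the vanishing of ${\mathcal T}_K$ forces ${\mathcal W}_K = 1$, which — via ${\mathcal W}_K = W_K/\mu_K = \big(\plus_{{\mathfrak p}\div p} \mu_{K_{\mathfrak p}}\big)/\mu_K$ — splits into the two cases of the alternative according to whether $\mu_p \subset K$. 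The remaining conditions on the splitting of $p$-places in $K(\mu_p)/K$ and on the $\omega$-component of the class group of $K(\mu_p)$ come from descending the single-$\mathfrak p$ criterion through the $\Delta = {\rm Gal}(K(\mu_p)/K)$-action and taking $\omega$-eigenspaces (the reflection/$\omega$-component argument), which is how one reduces the $\mu_p \not\subset K$ case to the $\mu_p \subset K$ case over the reflex field.

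The hard part will be the precise eigenspace (reflection) argument in the second bullet of (iii): translating the triviality of ${\mathcal T}_K$ over $K$ into the statement about $\omega$-components over $K(\mu_p)$ requires a careful descent under the action of the Teichm\"uller character, controlling both the local splitting condition and the class-group generation condition simultaneously. The other equivalences are essentially formal consequences of the diagram, the defining exact sequence, and standard pro-$p$ cohomology, but the $\omega$-component analysis is where the genuine arithmetic input lies. Since the statement is cited from \cite{GJ} and \cite{Gr1}, I would ultimately reduce to those references for the reflection step rather than reprove it in full.
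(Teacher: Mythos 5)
First, a point of comparison: the paper itself does not prove Theorem~\ref{cns} at all --- it is quoted from \cite[Th\'eor\`eme et D\'efinition 4.1]{GJ} and \cite[Theorem IV.3.5]{Gr1} --- so your attempt can only be measured against the arguments of those references. For (i) and (ii) your sketch does follow their standard route (structure of ${\mathcal A}_K$ as a $\Z_p$-module, Burnside basis, and the global Euler--Poincar\'e characteristic $d-r=r_2+1$), with two caveats. To get (i) $\Rightarrow$ $p$-rationality you must invoke the \emph{unconditional} rank formula ${\rm rk}_{\Z_p}({\mathcal A}_K)=r_2+1+\delta$, where $\delta$ is the Leopoldt defect, so that ${\mathcal A}_K\simeq\Z_p^{r_2+1}$ forces $\delta=0$; as written, you assume Leopoldt in order to pin the free rank and then conclude ``(i) iff torsion vanishes,'' which is mildly circular since $p$-rationality includes Leopoldt by definition. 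Also, $p=2$ (real places, ordinary versus restricted sense, the modified Euler characteristic) is never addressed, although the theorem explicitly covers it.

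The genuine gap is in (iii), and it is not confined to the second bullet as you suggest. You plan to split ${\mathcal T}_K=1$ into ${\mathcal W}_K={\mathcal R}_K=\Cl_K^\infty=1$ and to get the $\mu_p\subset K$ bullet by ``unwinding the Artin symbol description'' or ``reading the diagram,'' reserving the reflection argument for the $\mu_p\not\subset K$ case. But the first bullet is already non-formal: the condition ``the single ${\mathfrak p}\mid p$ generates the $p$-class group'' is a statement about ideal classes, whereas ${\mathcal R}_K=1$ and $\Cl_K^\infty=1$ are statements about $p$-adic logarithms of units and about $H_K\cap\wt K$; no diagram chase identifies the two. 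Their equivalence is precisely the Kummer-duality (\v Safarevi\v c-type) computation of the dual of ${\mathcal T}_K$ as a radical, giving (for $\mu_p\subset K$, restricted sense if $p=2$) ${\rm rk}_p({\mathcal T}_K)={\rm rk}_p\big(\Cl_K/\langle\,\hbox{classes of the }{\mathfrak p}\mid p\,\rangle\big)+\order S_p-1$, from which bullet 1 follows; bullet 2 is this same duality applied over $K(\mu_p)$ and cut down to the $\omega$-eigenspace. Note also that bullet 1 nowhere asserts $\mu_{K_{\mathfrak p}}=\mu_K$, which your route needs for ${\mathcal W}_K=1$: it is a \emph{consequence} of the generation condition (if $\mu_{p^2}\subset K_{\mathfrak p}$ with $\mu_{p^2}\not\subset K$, then $K(\mu_{p^2})/K$ would be everywhere unramified with ${\mathfrak p}$ split, contradicting $\Cl_K=\langle{\mathfrak p}\rangle$), so your case split does not line up one-to-one with the three invariants. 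Since you defer exactly this duality/reflection input to \cite{GJ} and \cite{Gr1} --- the references the theorem is itself quoted from --- your proposal organizes the formal part correctly but does not constitute a proof of (iii).
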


We can give, for $p=2$ and $p=3$, more elaborate statements as follows:

\begin{example} \label{p=2}{\rm
From \cite[\S\,III.2, Corollary to Theorem 2]{Gr4} using properties
of the ``regular kernel'' of the ${\rm K}_2(K)$ of a number field,
or \cite[Example IV.3.5.1]{Gr1} from properties of the groups ${\mathcal T}_K$,
we can characterize the $2$-rationality of $2$-extensions of $\Q$, 
independently of (iii):

\smallskip
{\it The set of abelian $2$-rational $2$-extensions of $\Q$ are all the subfields
of the compositum $\Q(\mu_{2^\infty}^{}) \cdot \Q(\sqrt{-\ell})$, $\ell \equiv 3
\pmod 8$ prime, and all the subfields of the compositum $\Q(\mu_{2^\infty}^{}) \cdot 
\Q \Big(\sqrt{\sqrt{\ell} \,(a - \sqrt{\ell}) /2} \,\Big)$, $\ell = a^2+b^2 \equiv 5 \pmod 8$ 
prime, $a$ odd. }

\smallskip
For quadratic fields this gives, for any 
$\ell \equiv \pm 3 \pmod 8$:

\smallskip
\centerline{$K \in \{\Q(\sqrt {-1}), \ \Q(\sqrt {2}),\  \Q(\sqrt {-2})$,\  $\Q(\sqrt {\ell}), \ 
\Q(\sqrt {-\ell}),\  \Q(\sqrt {2 \ell}),\  \Q(\sqrt {-2 \ell})\}$.}}
\end{example}

\begin{example} \label{p=3}{\rm
In the same way, the set of abelian $3$-rational $3$-extensions 
of $\Q$ are all the subfields of the compositum of 
any cubic field of prime conductor $\ell  =\frac{a^2+27\,b^2}{4} 
\equiv 4 \ {\rm or}\  7\! \pmod 9$ (for which a defining monic polynomial is 
$x^3+x^2-\frac{\ell-1}{3}\,x- \frac{\ell\,(a+3)-1}{27}$, $a \equiv 1\! \pmod 3$),
with the cyclotomic $\Z_3$-extension.}
\end{example}

\begin{example} \label{spiegel}
{\rm Consider the prime $p=3$ and a quadratic field $K=\Q(\sqrt d)$, 
$d \in \Z$, $K\ne \Q(\sqrt{-3})$; 
in this case the $\omega$-component of the $3$-class group of 
$K(\mu_3^{}) = K(\sqrt {-3})$ is isomorphic to the $3$-class group of 
the mirror field $K^* := \Q(\sqrt{-3 \, d})$. Moreover the 
$3$-class of ${\mathfrak p} \mid 3$ in $\Q(\sqrt{-3 \, d})$ is trivial since 
$3$ is ramified or inert in this extension under the non-splitting 
assumption in $K(\mu_3^{})/K$. Thus in that case, the $3$-rationality of 
$K$ is equivalent to fulfill the following two conditions for $K\ne \Q(\sqrt{-3})$:

\smallskip
(i) The $3$-class group of $K^* = \Q(\sqrt{-3 \, d})$ is trivial,

\smallskip
(ii) we have $d \equiv \pm 1 \pmod 3$ (i.e.,  ramification of $3$ in $K^*$) 
or $d \equiv 3 \!\pmod 9$ (i.e., inertia of $3$ in $K^*$).

\smallskip
If $d \equiv 6 \pmod 9$, the prime
ideal above $3$ in $K$ splits in $K(\sqrt{-3})$ so that the non-$3$-rationality
of $K$ comes from the factor 
$\bigoplus_{{\mathfrak p} \div p} \mu^{}_{K_{\mathfrak p}} \big / \mu_K^{}
= \mu^{}_{K_{\mathfrak p}} \simeq  \Z/ 3\,\Z$ for the unique 
${\mathfrak p} \mid 3$ in $K$.}
\end{example}

\begin{remarks} \label{remas}
(a) Under Leopoldt's conjecture, the transfer homomorphisms
${\mathcal T}_k \to {\mathcal T}_K$ are injective in any extension $K/k$
of number fields \cite[Theorem IV.2.1]{Gr1}; so if $K$ is $p$-rational, 
all the subfields of $K$ are $p$-rational. If $p \nmid [K : \Q]$, the
norms ${\rm N}_{K/k} : {\mathcal T}_K \to {\mathcal T}_k$ 
(which correspond to the restrictions
${\mathcal T}_K \to {\mathcal T}_k$ by class field theory) are 
surjective for all $k \subseteq K$.

\smallskip
When $K/\Q$ is Galois and $p \nmid [K : \Q]$, the reciprocal
about the $p$-rationalities of subfields of $K$ is true
for some Galois groups $G:= {\rm Gal}(K/\Q)$ and some families of subfields.
This occurs for instance when $K/\Q$ is abelian with the family of 
all maximal cyclic subfields of $K$: indeed, as $p \nmid [K : \Q]$,
${\mathcal T}_K \simeq \bigoplus_\chi {\mathcal T}_K^{e_\chi}$,
where $\chi$ runs trough the set of
rational characters of $G$, $e_\chi$ being the 
corresponding idempotent; then ${\mathcal T}_K^{e_\chi}$
is isomorphic to a submodule of ${\mathcal T}_{k_\chi}$
where $k_\chi$ (cyclic) is the subfield of $K$ 
fixed by the kernel of $\chi$.

\smallskip
For a compositum $K$ of quadratic fields, this
means that, for $p>2$, $K$ is $p$-rational if and only if all the
quadratic subfields of $K$ are $p$-rational.

\smallskip
\quad (b) When $K$ is a $p$-extension of a $p$-rational field $k$, 
$K$ is $p$-ratio\-nal if and only if the extension $K/k$ is 
{\it $p$-primitively ramified}. This notion, defined first in 
\cite[Chapter III, \S\,1 \& \S\,2]{Gr4} form our Crelle's papers on 
$p$-ramification and in \cite[Section 1]{GJ}, 
contains the case of $p$-ramification 
and some other explicit cases as in Examples \ref{p=2} and \ref{p=3}; 
this notion has been extensively applied in 
\cite[Theorem IV.3.3, Definition IV.3.4]{Gr1}, \cite{Ja, JN, MN, Mo}.

\smallskip
\quad (c) In \cite{Gre}, abelian $\ell$-extensions ($\ell \ne p$ prime) are used
for applications to continuous Galois representations
${\rm Gal}(\ov \Q/\Q) \to {\rm GL}_n(\Z_p)$
with open image for which one needs the condition (ii) of 
Theorem \ref{cns} (i.e., the $p$-rationality), and where some
properties given in the above references are proved again.

\smallskip
\quad (d) We have conjectured in \cite{Gr3} that $K$ is $p$-rational 
for all $p \gg 0$, but in practical applications one works with small primes; 
so, care has been taken to consider also the case $p=2$ in the programs.
We ignore what kind of heuristics (in the meaning of many works such as
\cite{CL, CM, DJ, FK, PS} on class groups, Tate-Shafarevich groups,\,$\ldots$) 
are relevant for the ${\mathcal T}_K$ when $p$ varies. 
This should be very interesting since the ${\mathcal T}_K$ mix classes and units.
\end{remarks}

From the general schema, ${\mathcal T}_K=1$ if and only if the 
three invariants ${\mathcal W}_K$, ${\mathcal R}_K$ 
and $\Cl_K^\infty$, are trivial. 
Thus, in some cases, it may be possible to test each of these trivialities,
depending on the knowledge of the field $K$; for instance, assuming the
$p$-class group trivial and $p$ unramified, the computation of ${\mathcal W}_K$ 
is purely local and that of the normalized $p$-adic regulator ${\mathcal R}_K$,
closely related to the classical $p$-adic regulator,
may be given in a specific program since it is the most unpredictable invariant. 
But a field given by means of a polynomial $P$ may be more mysterious
regarding these three factors.

\section{General theoretical test of $p$-rationality}

\subsection{Test using a suitable ray class group}
In \cite{Pi} and \cite[Theorem 3.11 \& Corollary 4.1]{PV}, 
are given analogous methods for
the general computation of the structure of ${\mathcal T}_K$, 
but here we need only to characterize the triviality (or not) of 
${\mathcal T}_K$ in the relation:

\medskip
\centerline{${\mathcal A}_K := {\rm Gal}(H_K^{\rm pr}/K) \simeq 
\Z_p^r \times {\mathcal T}_K$,}

\medskip
where $r=r_2+1$, $2\,r_2$ being the number of complex embeddings of $K$. 
As ${\mathcal T}_K$ is a direct factor in ${\mathcal A}_K$, the
structure of the whole Galois group ${\mathcal A}_K$ may be analized at 
a finite step by means of the Galois group of
a suitable ray class field $K(p^n)$ of modulus $(p^n)$. 
Since PARI gives the structure of ray class groups 
$\Cl_K(p^n):={\rm Gal}(K(p^n)/K)$,
the test of $p$-rationality is obtained for $n$ large enough as follows:
if $\Cl_K(p^n)$ has a $p$-rank such that:
$${\rm rk}_p(\Cl_K(p^n)) \geq r+1,$$
then  $K$ is not $p$-rational since ${\rm Gal}(\wt K/K)$ has $p$-rank $r$.
The minimal $n_0$ needed for the test is given as a consequence of the 
following result:

\begin{theorem} \label{thmfond}
For any ${\mathfrak p} \mid p$ in $K$ and any 
$j\geq 1$, let $U_{\mathfrak p}^j$
be the group of local units $1+ \ov{\mathfrak p}^j$, where 
$\ov{\mathfrak p}$ is the maximal ideal of the ring of integers of $K_{\mathfrak p}$. 

\smallskip
For a modulus of the form $(p^n)$, $n\geq 0$, let
$\Cl_K(p^n)$ be the corresponding ray class group. Then
for $m \geq n \geq 0$, we have the inequalities:
$$0 \leq {\rm rk}_p(\Cl_K(p^m)) - {\rm rk}_p(\Cl_K(p^n))
\leq \sm_{{\mathfrak p} \mid p} {\rm rk}_p 
\big((U_{\mathfrak p}^1)^p\,U_{\mathfrak p}^{n \cdot e_{\mathfrak p}}/
(U_{\mathfrak p}^1)^p\,U_{\mathfrak p}^{m \cdot e_{\mathfrak p}} \big), $$
where $e_{\mathfrak p}$ is the ramification index of ${\mathfrak p}$ in $K/\Q$.
\end{theorem}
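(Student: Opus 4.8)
The plan is to dualise and to compute $p$-ranks as dimensions of spaces of order-$p$ characters. For a finite abelian group $G$ one has ${\rm rk}_p(G)=\dim_{{\mathbb F}_p}{\rm Hom}(G,\Z/p\Z)$, so I first record that, since $(p^n)\mid(p^m)$ for $m\geq n$, the natural surjection $\pi\colon\Cl_K(p^m)\too\Cl_K(p^n)$ dualises to an \emph{injection} $\pi^*\colon{\rm Hom}(\Cl_K(p^n),\Z/p\Z)\hookrightarrow{\rm Hom}(\Cl_K(p^m),\Z/p\Z)$. This yields at once the left inequality $0\leq{\rm rk}_p(\Cl_K(p^m))-{\rm rk}_p(\Cl_K(p^n))$ and identifies the difference on the right with $\dim_{{\mathbb F}_p}{\rm coker}(\pi^*)$.

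Next I would translate characters through class field theory. An element $\chi\in{\rm Hom}(\Cl_K(p^m),\Z/p\Z)$ corresponds, via the Artin map, to a character of order dividing $p$ that is unramified outside $p$ and whose local conductor exponent at each ${\mathfrak p}\mid p$ is at most $m\,e_{\mathfrak p}$. Composing with the local reciprocity map, its restriction to the local units $U_{\mathfrak p}^1$ kills $p$-th powers and $U_{\mathfrak p}^{m\,e_{\mathfrak p}}$, hence factors through $U_{\mathfrak p}^1\big/(U_{\mathfrak p}^1)^p\,U_{\mathfrak p}^{m\,e_{\mathfrak p}}$; and $\chi$ lies in the image of $\pi^*$ precisely when, for every ${\mathfrak p}\mid p$, this restriction is moreover trivial on $U_{\mathfrak p}^{n\,e_{\mathfrak p}}$, i.e. when the conductor of $\chi$ divides $\prod_{{\mathfrak p}\mid p}\ov{\mathfrak p}^{\,n\,e_{\mathfrak p}}=(p^n)$.

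Set $X_{\mathfrak p}:=(U_{\mathfrak p}^1)^p\,U_{\mathfrak p}^{n\,e_{\mathfrak p}}\big/(U_{\mathfrak p}^1)^p\,U_{\mathfrak p}^{m\,e_{\mathfrak p}}$, an ${\mathbb F}_p$-vector space fitting (because $U_{\mathfrak p}^{m\,e_{\mathfrak p}}\subseteq U_{\mathfrak p}^{n\,e_{\mathfrak p}}$) into the exact sequence
$$0\too X_{\mathfrak p}\too U_{\mathfrak p}^1\big/(U_{\mathfrak p}^1)^p\,U_{\mathfrak p}^{m\,e_{\mathfrak p}}\too U_{\mathfrak p}^1\big/(U_{\mathfrak p}^1)^p\,U_{\mathfrak p}^{n\,e_{\mathfrak p}}\too 0,$$
so that ${\rm Hom}(X_{\mathfrak p},\Z/p\Z)$ is the quotient of the two corresponding character groups. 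Sending the class of $\chi$ to the tuple of its local restrictions then defines a homomorphism ${\rm coker}(\pi^*)\to\bigoplus_{{\mathfrak p}\mid p}{\rm Hom}(X_{\mathfrak p},\Z/p\Z)$, which by the conductor criterion of the previous paragraph is injective: a class mapping to $0$ has all its local restrictions trivial on the $U_{\mathfrak p}^{n\,e_{\mathfrak p}}$, hence lies in the image of $\pi^*$. Taking ${\mathbb F}_p$-dimensions and using $\dim_{{\mathbb F}_p}{\rm Hom}(X_{\mathfrak p},\Z/p\Z)={\rm rk}_p(X_{\mathfrak p})$ gives
$${\rm rk}_p(\Cl_K(p^m))-{\rm rk}_p(\Cl_K(p^n))=\dim_{{\mathbb F}_p}{\rm coker}(\pi^*)\leq\sum_{{\mathfrak p}\mid p}{\rm rk}_p(X_{\mathfrak p}),$$
which is the asserted bound.

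The step I expect to be the crux is the injectivity just used, namely that an order-$p$ character unramified outside $p$ is pinned down, modulo factoring through $\Cl_K(p^n)$, by its restrictions to the local unit filtrations; equivalently, that triviality on $U_{\mathfrak p}^{n\,e_{\mathfrak p}}$ at every ${\mathfrak p}$ forces conductor dividing $(p^n)$. It is here that the factor $(U_{\mathfrak p}^1)^p$ in the statement is \emph{forced} rather than incidental: passing to ${\rm Hom}(-,\Z/p\Z)$ annihilates all $p$-th powers, so the relevant local invariant is automatically the quotient by $(U_{\mathfrak p}^1)^p$, which is why this refined bound improves the crude estimate $\sum_{{\mathfrak p}\mid p}{\rm rk}_p(U_{\mathfrak p}^{n\,e_{\mathfrak p}}/U_{\mathfrak p}^{m\,e_{\mathfrak p}})$. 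A minor technical point to check separately is the role of the archimedean places (and of the restricted sense when $p=2$): their contribution is common to the moduli $(p^m)$ and $(p^n)$, so it cancels in the difference of $p$-ranks and does not disturb the conductor argument.
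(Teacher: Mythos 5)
Your argument is correct, but it is not the paper's route: the paper offers no self-contained proof at all, simply invoking \cite[Theorem I.4.5 \& Corollary I.4.5.4]{Gr1} (general class field theory with ramification support $T$ and splitting set $S$, specialized to $T=\{{\mathfrak p}\mid p\}$ and $S$ the real places). What that citation packages is the group-side version of the statement: the surjection $\Cl_K(p^m)\to\Cl_K(p^n)$ has kernel equal to the Artin image of $\prod_{{\mathfrak p}\mid p}U_{\mathfrak p}^{n e_{\mathfrak p}}$, and since any $v^p w$ ($v\in U_{\mathfrak p}^1$, $w\in U_{\mathfrak p}^{m e_{\mathfrak p}}$) lands in $\Cl_K(p^m)^p$, the induced map on Frattini quotients has kernel a quotient of $\bigoplus_{{\mathfrak p}\mid p}(U_{\mathfrak p}^1)^p U_{\mathfrak p}^{n e_{\mathfrak p}}/(U_{\mathfrak p}^1)^p U_{\mathfrak p}^{m e_{\mathfrak p}}$, whence the bound. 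You prove exactly the Pontryagin dual of this: rank difference $=\dim_{\F_p}{\rm coker}(\pi^*)$, embedded by local restriction into $\bigoplus_{{\mathfrak p}\mid p}{\rm Hom}(X_{\mathfrak p},\Z/p\Z)$, the injectivity being the idelic conductor criterion (a character of $\Cl_K(p^m)$ killing the images of all $U_{\mathfrak p}^{n e_{\mathfrak p}}$ factors through $\Cl_K(p^n)$, because ${\rm Gal}(K(p^m)/K(p^n))$ is generated by those images). The two arguments are mirror images of one another; yours has the merit of being self-contained where the paper's is a black-box reference, while the paper's reference covers in one stroke the more general supports $P\subsetneq\{{\mathfrak p}\mid p\}$ that the author reuses later for ``incomplete $p$-rationality'' (your conductor argument would extend to that setting as well). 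You also correctly identified why the factor $(U_{\mathfrak p}^1)^p$ is the whole point: it is what makes the bound vanish for $n\geq n_0$ in Corollary \ref{corofond}, whereas the crude bound $\sum_{\mathfrak p}{\rm rk}_p(U_{\mathfrak p}^{n e_{\mathfrak p}}/U_{\mathfrak p}^{m e_{\mathfrak p}})$ never does.

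Two minor points. First, the case $n=0$: the theorem permits $n\geq 0$ but defines $U_{\mathfrak p}^j$ only for $j\geq 1$, and your conductor criterion for factoring through $\Cl_K(p^0)=\Cl_K$ requires triviality on the \emph{full} local unit group $U_{\mathfrak p}$, not merely on $U_{\mathfrak p}^1$; so one must read $U_{\mathfrak p}^0:=U_{\mathfrak p}$ throughout (a gap in the statement as much as in your proof, and harmless for the corollary, which only uses $n\geq 2$). Second, your closing remark about archimedean places is the right disposal of that issue in the ordinary sense used here: both moduli have empty infinite part, so the archimedean idele components lie in the kernel of both Artin maps, and the relative Galois group is generated by the images of the $U_{\mathfrak p}^{n e_{\mathfrak p}}$ alone.
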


\begin{proof} From \cite[Theorem I.4.5 \& Corollary I.4.5.4]{Gr1} taking for
$T$ the set of $p$-places and for $S$ the set of real infinite places 
(ordinary sense).
\end{proof}

\begin{corollary} \label{corofond}
We have ${\rm rk}_p(\Cl_K(p^m)) = 
{\rm rk}_p(\Cl_K(p^n)) = {\rm rk}_p({\mathcal A}_K)$ for all 
$m \geq n \geq n_0$, where $n_0=3$ for $p=2$ and $n_0=2$ for $p >2$. 

\smallskip
Thus $K$ is $p$-rational if and only if 
${\rm rk}_p(\Cl_K(p^{n_0}))=r$, where $r=r_2+1$.
\end{corollary}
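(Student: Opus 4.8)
The plan is to use Theorem~\ref{thmfond} to force its upper bound to vanish once $n\ge n_0$, so that the $p$-ranks ${\rm rk}_p(\Cl_K(p^n))$ stabilize, and then to identify the stable value with ${\rm rk}_p({\mathcal A}_K)$. First I would observe that the upper bound in Theorem~\ref{thmfond} is a sum, over ${\mathfrak p}\mid p$, of $p$-ranks of the local quotients $(U_{\mathfrak p}^1)^p\,U_{\mathfrak p}^{n e_{\mathfrak p}}\big/(U_{\mathfrak p}^1)^p\,U_{\mathfrak p}^{m e_{\mathfrak p}}$, and that each such quotient is trivial as soon as $U_{\mathfrak p}^{n e_{\mathfrak p}}\subseteq (U_{\mathfrak p}^1)^p$: indeed one then has $(U_{\mathfrak p}^1)^p\,U_{\mathfrak p}^{n e_{\mathfrak p}}=(U_{\mathfrak p}^1)^p=(U_{\mathfrak p}^1)^p\,U_{\mathfrak p}^{m e_{\mathfrak p}}$ for every $m\ge n$, since $U_{\mathfrak p}^{m e_{\mathfrak p}}\subseteq U_{\mathfrak p}^{n e_{\mathfrak p}}$. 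This reduces everything to a purely local inclusion, uniform in ${\mathfrak p}$.

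The local heart of the matter is the effect of the $p$-th power map on the unit filtration of $K_{\mathfrak p}$. Writing $e=e_{\mathfrak p}$ (so that $v_{\mathfrak p}(p)=e$) and fixing a uniformizer $\pi$, one expands $(1+a\,\pi^j)^p$ by the binomial theorem: the two terms of least valuation are $p\,a\,\pi^j$, of valuation $j+e$, and $a^p\pi^{pj}$, of valuation $pj$, and the former strictly dominates exactly when $j+e<pj$, i.e. $j>e/(p-1)$. For such $j$ the induced map on graded pieces is an isomorphism, whence $(U_{\mathfrak p}^j)^p=U_{\mathfrak p}^{\,j+e}$; in particular $U_{\mathfrak p}^{\,j_0+e}=(U_{\mathfrak p}^{j_0})^p\subseteq (U_{\mathfrak p}^1)^p$, where $j_0:=\lfloor e/(p-1)\rfloor+1$ is the least integer exceeding $e/(p-1)$. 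It then remains to check the arithmetic of the exponents. For $p>2$ one has $\lfloor e/(p-1)\rfloor\le\lfloor e/2\rfloor\le e-1$, so $j_0\le e$ and $j_0+e\le 2e=n_0\,e$ with $n_0=2$; for $p=2$ one has $\lfloor e/(p-1)\rfloor=e$, so $j_0=e+1$ and $j_0+e=2e+1\le 3e=n_0\,e$ with $n_0=3$ (using $e\ge 1$). In either case $U_{\mathfrak p}^{\,n_0 e}\subseteq U_{\mathfrak p}^{\,j_0+e}\subseteq (U_{\mathfrak p}^1)^p$, so the bound of Theorem~\ref{thmfond} is $0$ for $m\ge n\ge n_0$, giving the first equality of the statement.

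Finally I would identify the stabilized rank. The class field theory surjections $\Cl_K(p^m)\twoheadrightarrow\Cl_K(p^n)$ ($m\ge n$) make the $p$-Sylow subgroups into an inverse system whose limit is ${\mathcal A}_K={\rm Gal}(H_K^{\rm pr}/K)$; these maps being onto, ${\rm rk}_p(\Cl_K(p^n))$ is non-decreasing and, once constant, equals ${\rm rk}_p({\mathcal A}_K)$. From the decomposition ${\mathcal A}_K\simeq\Z_p^r\times{\mathcal T}_K$ with $r=r_2+1$ one gets ${\rm rk}_p({\mathcal A}_K)=r+{\rm rk}_p({\mathcal T}_K)$, and since ${\mathcal T}_K$ is a finite $p$-group, ${\rm rk}_p({\mathcal T}_K)=0$ is equivalent to ${\mathcal T}_K=1$, i.e. to $p$-rationality. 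Hence $K$ is $p$-rational if and only if ${\rm rk}_p(\Cl_K(p^{n_0}))=r$. The step I expect to be the main obstacle is the local exponent bookkeeping that pins down $n_0$: getting the threshold $j>e/(p-1)$ correct and making it uniform over all ramification indices $e_{\mathfrak p}$, and in particular the case $p=2$, where $p-1=1$ pushes $j_0$ up to $e+1$ and thereby forces the larger value $n_0=3$.
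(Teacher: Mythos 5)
Your proof is correct and takes essentially the same route as the paper: reduce, via Theorem~\ref{thmfond}, to the local inclusion $U_{\mathfrak p}^{n\cdot e_{\mathfrak p}} \subseteq (U_{\mathfrak p}^1)^p$ for all ${\mathfrak p}\mid p$ once $n\geq n_0$, then identify the stabilized rank with ${\rm rk}_p({\mathcal A}_K)=r+{\rm rk}_p({\mathcal T}_K)$ using ${\mathcal A}_K\simeq \Z_p^r\times{\mathcal T}_K$. The only difference is that where the paper cites the local fact from \cite{FV} and \cite{W} (namely $U_{\mathfrak p}^{j}\subseteq (U_{\mathfrak p}^1)^p$ for $j>p\,e_{\mathfrak p}/(p-1)$), you prove it directly by the binomial/graded-piece argument, and your exponent bookkeeping for $n_0=2$ ($p>2$) and $n_0=3$ ($p=2$) is accurate.
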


\begin{proof} It is sufficient to get, for some fixed $n \geq 0$:
$$\hbox{$(U_{\mathfrak p}^1)^p\,U_{\mathfrak p}^{n \cdot e_{\mathfrak p}} 
= (U_{\mathfrak p}^1)^p$, \  for all ${\mathfrak p} \mid p$, }$$
hence $U_{\mathfrak p}^{n \cdot e_{\mathfrak p}} \subseteq 
(U_{\mathfrak p}^1)^p$ for all ${\mathfrak p} \mid p$; 
indeed, we then have:
$$\hbox{${\rm rk}_p(\Cl_K(p^n)) = {\rm rk}_p(\Cl_K(p^m)) = 
r+ {\rm rk}_p({\mathcal T}_K)$ as $m\to\infty$, } $$
giving ${\rm rk}_p(\Cl_K(p^n)) = r+ {\rm rk}_p({\mathcal T}_K)$ for such $n$. 

\medskip
The condition $U_{\mathfrak p}^{n \cdot e_{\mathfrak p}} \subseteq 
(U_{\mathfrak p}^1)^p$ is fulfilled as soon as
$n \cdot e_{\mathfrak p} > \Frac{p\cdot e_{\mathfrak p}}{p-1}$, hence:
$$n > \Frac{p}{p-1}$$
(see \cite[Chap. I, \S\,5.8, Corollary 2]{FV}
or \cite[Proposition 5.7]{W}; a fact also used in \cite[Proposition 1.13]{HM}), 
whence the value of $n_0$; furthermore, $\Cl_K(p^{n_0})$ gives the 
exact $p$-rank of ${\mathcal T}_K$.
\end{proof}

\subsection{Basic invariants of $K$ with PARI \cite{P}}
The reader whishing to test only $p$-rationalities may 
go directly to Subsections \ref{prog1}, \ref{prog2}.

\smallskip
The examples shall be given with the following polynomial defining $K$:
$$P=x^3-5\,x+3$$
(recall that for PARI, $P$ must be monic and in $\Z[x]$),
for which one gives the classical information 
(test of irreducibility, Galois group of the Galois closure of $K$, 
discriminant of $K$) which are the following,
with the PARI responses:

\smallskip\footnotesize
\begin{verbatim}
polisirreducible(x^3-5*x+3)
   1
polgalois(x^3-5*x+3)
   [6,-1,1,"S3"]
factor(nfdisc(x^3-5*x+3))
   [257 1]
\end{verbatim}

\normalsize\smallskip\noindent
showing that $P$ is irreducible, that the Galois closure of $K$ is
the diedral group of order $6$, and that
the discriminant of $K$ is the prime $257$.

\smallskip
Then $K$ is precised by the signature $[r_1, r_2]$, 
the structure of the whole class group and the fundamental units.

\smallskip\footnotesize
\begin{verbatim}
{P=x^3-5*x+3;p=2;K=bnfinit(P,1);C7=component(K,7);C8=component(K,8);
Sign=component(C7,2);print("Signature of K: ",Sign);
print("Structure of the class group: ",component(C8,1));
print("Fundamental system of units: ",component(C8,5))}
[3, 0],  [1, [], []],  [x-1, x^2+2*x-2]
\end{verbatim}

\normalsize\smallskip\noindent
giving a totally real field, a trivial class group and the two fundamental units.
From $K=bnfinit(P,1)$ and $C8=component(K,8)$, the regulator of $K$ is given 
by $component(C8,1)$.
The next program gives, for information, the decomposition of some primes $p$:

\smallskip\footnotesize
\begin{verbatim}
{P=x^3-5*x+3;K=bnfinit(P,1);
forprime(p=2,13,print(p," ",idealfactor(K,p)))}
2  Mat([[2,[2,0,0]~,1,3,1],1])
3  [[3,[0,0,1]~,1,1,[1,1,-1]~],1;[3,[4,1,-1]~,1,2,[0,0,1]~],1]
5  [[5,[2,0,1]~,1,1,[2,1,2]~],1;[5,[2,1,-3]~,1,2,[2,0,1]~],1]
7  [[7,[1,0,1]~,1,1,[-1,1,-2]~],1;[7,[6,1,-2]~,1,2,[1,0,1]~],1]
11 Mat([[11,[11,0,0]~,1,3,1],1])
13 Mat([[13,[13,0,0]~,1,3,1],1])
\end{verbatim}

\normalsize\smallskip\noindent
showing that $2, 11, 13$ are inert, that $3, 5, 7$ split into two 
prime ideals with residue degrees 1 and 2, respectively. 
Taking $p=257$, one obtains:

\smallskip\footnotesize
\begin{verbatim}
257 [[257,[-101,0,1]~,1,1,[-81,1,100]~],1;
                                     [257,[-78,0,1]~,2,1,[-86,1,77]~],2]
\end{verbatim}

\normalsize\smallskip\noindent
which splits into ${\mathfrak p}_1 \cdot {\mathfrak p}_2^2$.

\smallskip
To obtain a polynomial from a compositum of known fields 
(e.g., quadratic fields) one uses by induction the instruction
$polcompositum$:

\smallskip\footnotesize
\begin{verbatim}
{P1=x^2-2;P2=x^2+5;P3=x^2-7;P=polcompositum(P1,P2);P=component(P,1);
P=polcompositum(P,P3);P=component(P,1);print(P)}
x^8-16*x^6+344*x^4+2240*x^2+19600
\end{verbatim}

\normalsize\smallskip
The instruction giving the structure of $\Cl_K(p^{n})$ 
is the following (we compute the structure of the ray 
class groups with modulus $p^n$, up to $n=5$, to see the 
stabilization of the $p$-ranks; this would give 
the group invariants of ${\mathcal T}_K$, 
as is done in \cite{Pi, PV}):

\smallskip\footnotesize
\begin{verbatim}
{K=bnfinit(x^3-5*x+3,1);p=2;for(n=0,5,Hpn=bnrinit(K,p^n);
print(n," ",component(Hpn,5)))}
p=2                 p=3               p=257
0  [1,[]]           0  [1,[]]         0  [1,[]]
1  [1,[]]           1  [1,[]]         1  [128,[128]]
2  [2,[2]]          2  [3,[3]]        2  [32896,[32896]]
3  [4,[2,2]]        3  [9,[9]]        3  [8454272,[8454272]]
4  [8,[4,2]]        4  [27,[27]]      4  [2172747904,[2172747904]]
5  [16,[8,2]]       5  [81,[81]]      5  [558396211328,[558396211328]]
\end{verbatim}

\normalsize\smallskip\noindent
(where $32896=2^7 \cdot 257$, $8454272=2^7 \cdot 257^2$, 
$2172747904 =2^7 \cdot 257^3$,\,$\ldots$):

\smallskip
Thus $K$ is $p$-rational for $p=3$ and $257$, but not for $p=2$.

\smallskip
Now we give the case of the first irregular prime $p=37$
for which we know that the $p$th cyclotomic field is
not $p$-rational (indeed, ${\mathcal T}_K=1$ is equivalent
to $\Cl_K=1$ for the $p$th cyclotomic fields \cite[Th\'eor\`eme \& D\'efinition 2.1]{GJ}). 
So we must see that the $p$-ranks are equal to $19+1$, at least for $n\geq 2$:

\smallskip
\footnotesize
\begin{verbatim}
{p=37;K=bnfinit(polcyclo(p),1);
for(n=0,2,Hpn=bnrinit(K,p^n);print(n," ",component(Hpn,5)))}
0 [37,[37]]
1 [624931990990842127748277129373,[1369,37,37,37,37,37,37,37,37,37,
   37,37,37,37,37,37,37,37]]
2 [390539993363777986320898213181845819006713655084697379373129,
  [50653,1369,1369,1369,1369,1369,1369,1369,1369,1369,1369,1369,
  1369,1369,1369,1369,1369,37,37,37]]
\end{verbatim}

\normalsize\smallskip\noindent
where $1369=37^2, 50653=37^3$.

\section{Full general PARI programs testing $p$-rationality}\label{prog}
We bring together some instructions given in the previous section and recall
that, in the programs, $n=2$ (resp. $3$) if $p\ne 2$ (resp. $p=2$).

\subsection{\!General program with main invariants and test of $p$-rationality}\label{prog1}
The reader has to introduce an {\it irreducible monic polynomial} $P \in \Z[x]$ and a 
{\it prime number} $p \geq 2$. For $p=2$ the $p$-rationality is in the 
ordinary sense.

\smallskip
\footnotesize
\begin{verbatim}
====================================================================
  {P=x^3-5*x+3;p=2;K=bnfinit(P,1);
  Sign=component(component(K,7),2);print("Signature of K: ",Sign);
  print("Galois group of the Galois closure of K: ",polgalois(P));
  print("Discriminant: ",factor(component (component(K,7), 3)));
  print("Structure of the class group: ",component(component(K,8),1));
  print("Fundamental system of units: ",component(component(K,8),5));
  r=component(Sign,2)+1;n=2;if(p==2,n=3);
  print(p,"-rank of the compositum of the Z_",p,"-extensions: ",r);
  Hpn=component(component(bnrinit(K,p^n),5),2);L=listcreate;
  e=component(matsize(Hpn),2);R=0;for(k=1,e,c=component(Hpn,e-k+1);
  if(Mod(c,p)==0,R=R+1;listinsert(L,p^valuation(c,p),1)));
  print("Structure of the ray class group mod ",p,"^n: ",L);
  if(R>r,print("rk(T)=",R-r," K is not ",p,"-rational"));
  if(R==r,print("rk(T)=",R-r," K is ",p,"-rational"))}
====================================================================
\end{verbatim}

\normalsize\smallskip
(i) With the above data, $K$ is not $2$-rational nor $293$-rational
(up to $p \leq 10^5$).

\smallskip
(ii) The field $K=\Q(\sqrt{-161})$ is not $2$-rational 
(one may prove that the $2$-Hilbert class field is linearly disjoint from $\wt K$ 
\cite[Example III.6.7]{Gr1}).

\smallskip
(iii) The field $K=\Q(\sqrt{69})$ is not $3$-rational (comes
from ${\mathcal R}_K\ne1$).

\smallskip
(iv) The quartic cyclic fields $K$ defined by $P=x^4+5\,x^2+5$
(conductor $5$) and by $P=x^4+13\,x^2+13$ (conductor $13$) 
are $2$-rational (see Example \ref{p=2}).

\subsection{Test of $p$-rationality (simplified programs)}\label{prog2}
Since some parts of the first program are useless and some computations 
intricate (e.g., Galois groups in large degrees), one may use the following 
simplified program to test only the $p$-rationality. 

\smallskip
We test the $3$-rationality of 
$K=\Q(\sqrt{-1}, \sqrt{2}$, $\sqrt{5}, \sqrt{11}, \sqrt{97})$
given in \cite{Gre} ($P$ is computed from the instruction 
$polcom\-positum$; the program takes $19\, min,\, 26.663\, ms$ and need 
$allocatemem(800000000)$):

\smallskip
\begin{verbatim}
Programme I (define the polynomial P and the prime p):
\end{verbatim}

\smallskip\footnotesize
\begin{verbatim}
====================================================================
{P=x^32-1824*x^30+1504544*x^28-743642240*x^26+246039201472*x^24
-57656224594432*x^22+9874427075761664*x^20-1257037661975865344*x^18
+119781806181353182720*x^16-8534335878932228562944*x^14
+450658848166023111041024*x^12-17330171952567833219399680*x^10
+471547188605910876106571776*x^8-8678484783929508254539710464*x^6
+100678473375628844348283158528*x^4-658128522558747992210233884672*x^2
+1995918433518957384065860304896;K=bnfinit(P,1);p=3;n=2;if(p==2,n=3);
Kpn=bnrinit(K,p^n);S=component(component(Kpn,1),7);
r=component(component(S,2),2)+1;
print(p,"-rank of the compositum of the Z_",p,"-extensions: ",r);
Hpn=component(component(Kpn,5),2);L=listcreate;
e=component(matsize(Hpn),2);R=0;for(k=1,e,c=component(Hpn,e-k+1);
if(Mod(c,p)==0,R=R+1;listinsert(L,p^valuation(c,p),1)));
print("Structure of the ",p,"-ray class group: ",L);
if(R>r,print("rk(T)=",R-r," K is not ",p,"-rational"));
if(R==r,print("rk(T)=",R-r," K is ",p,"-rational"))}
====================================================================
3-rank of the compositum of the Z_3-extensions: 17
Structure of the 3-ray class group:List([3,3,3,3,3,3,3,3,3,3,3,3,3,3,9,9,9])
K is 3-rational
\end{verbatim}

\normalsize\smallskip
If one whises to test the $p$-rationality of $K$ for $p$ varying in an 
interval $[b, B]$, it is necessary to compute first the data $bnfinit(P,1)$ 
which is independent of $p$ and takes lots of time. Then the tests for 
$p$-rationalities are very fast. 

\smallskip
This gives the following writing where we give the $p$-structure of
$\Cl_K(p^{n_0})$ up to $p \leq 100$, only for the non-$p$-rational cases:

\smallskip
\begin{verbatim}
Programme II (define P and the interval [b,B] of primes p):
\end{verbatim}

\smallskip\footnotesize
\begin{verbatim}
====================================================================
{P=x^32-1824*x^30+1504544*x^28-743642240*x^26+246039201472*x^24
-57656224594432*x^22+9874427075761664*x^20-1257037661975865344*x^18
+119781806181353182720*x^16-8534335878932228562944*x^14
+450658848166023111041024*x^12-17330171952567833219399680*x^10
+471547188605910876106571776*x^8-8678484783929508254539710464*x^6
+100678473375628844348283158528*x^4-658128522558747992210233884672*x^2
+1995918433518957384065860304896;K=bnfinit(P,1);b=2;B=100;
r=component(component(component(K,7),2),2)+1;
print("p-rank of the compositum of the Z_p-extensions: ",r);
forprime(p=b,B,n=2;if(p==2,n=3); 
Kpn=bnrinit(K,p^n);Hpn=component(component(Kpn,5),2);
L=listcreate;e=component(matsize(Hpn),2);
R=0;for(k=1,e,c=component(Hpn,e-k+1);if(Mod(c,p)==0,R=R+1; 
listinsert(L,p^valuation(c,p),1)));
print("Structure of the ",p,"-ray class group: ",L); 
if(R>r,print("rk(T)=",R-r," K is not ",p,"-rational"));
if(R==r,print("rk(T)=",R-r," K is ",p,"-rational")))}
====================================================================
p-rank of the compositum of the Z_p-extensions:17

List([2,2,2,2,2,4,4,4,4,4,4,4,8,8,8,8,8,8,8,16,16,16,32,32,64,128])
rk(T)=9 K not 2-rational
List([7,7,7,7,7,7,7,7,7,7,7,7,7,7,7,7,7,7])                
rk(T)=1 K not 7-rational
List([11,11,11,11,11,11,11,11,11,11,11,121,121,121,121,121,121,121,1331])  
rk(T)=2 K not 11-rational
List([13,13,13,13,13,13,13,13,13,13,13,13,13,13,13,13,13,13])      
rk(T)=1 K not 13-rational
List([17,17,17,17,17,17,17,17,17,17,17,17,17,17,17,17,17,17])      
rk(T)=1 K not 17-rational
List([19,19,19,19,19,19,19,19,19,19,19,19,19,19,19,19,19,19])      
rk(T)=1 K not 19-rational
List([29,29,29,29,29,29,29,29,29,29,29,29,29,29,29,29,29,29])       
rk(T)=1 K not  29-rational
List([31,31,31,31,31,31,31,31,31,31,31,31,31,31,31,31,31,31])      
rk(T)=1 K not 31-rational
List([43,43,43,43,43,43,43,43,43,43,43,43,43,43,43,43,43,43])      
rk(T)=1 K not 43-rational
List([73,73,73,73,73,73,73,73,73,73,73,73,73,73,73,73,73,73])      
rk(T)=1 K not 73-rational
\end{verbatim}

\normalsize\smallskip
The $p$-rationality holds for $3,5,23,37,41,47,53,59,61,67,71,79,83,89,97$
and we find that $K$ is not $p$-rational up to $10^5$ for the primes:

\smallskip
$p \in \{2, 7,11,13,17,19, 29, 31,43, 73, 163, 191, 263, 
409, 571, 643, 1049$, 

\qquad $2671, 3331, 3917, 6673, 8941, 28477, 36899, 39139, 85601, 99149$,

\qquad  $134339, 203393, 231901, 283979, 353711, 363719\}$.

\smallskip
The above cases of non-$p$-rationality are numerous, because of the 
important number of maximal cyclic subfields of $K$, and are essentially 
due to some units; for instance, for the unit $\varepsilon = 1+\sqrt 2 \in E_K$ 
and $p=31$, we get:
$$\varepsilon^{30}=152139002499+107578520350 \,\sqrt 2
\equiv 1 \pmod {31^2}, $$
which means ${\mathcal R}_K =
\frac{1}{31}\, {\rm log}(\varepsilon) \equiv 0 \pmod {31}$.

\medskip
We must note that, once the important instruction $K=bnfinit(P,1)$ is 
done by PARI, the execution time for various $p$ is much shorter.

\section{Heuristics on Greenberg's conjecture (with $p=3$)}

\subsection{Generalities}
A conjecture, in relation with the construction of continuous Galois 
representations ${\rm Gal}(\ov \Q/\Q) \too {\rm GL}_n(\Z_p)$ with 
open image, is the following, stated in the case of a compositum 
of quadratic fields \cite[Conjecture 4.2.1]{Gre}:

\begin{conjecture} \label{GC}
For any odd prime $p$ and for any $t \geq 1$ there
exists a $p$-rational field $K$ such that ${\rm Gal}(K/\Q) \simeq (\Z/2\Z)^t$.
\end{conjecture}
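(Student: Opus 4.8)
The plan is to exploit the reduction to quadratic subfields furnished by Remark~\ref{remas}(a). Since $p$ is odd we have $p \nmid [K:\Q] = 2^t$, so ${\mathcal T}_K$ splits into its rational-character idempotent components, and a $(\Z/2\Z)^t$-extension $K = \Q(\sqrt{d_1}, \ldots, \sqrt{d_t})$ is $p$-rational if and only if every one of its quadratic subfields is $p$-rational. Writing $d_I := \prod_{i \in I} d_i$ for $\emptyset \neq I \subseteq \{1,\dots,t\}$, those subfields are exactly the $\Q(\sqrt{d_I})$, so the conjecture reduces to the existence of $t$ multiplicatively independent squarefree integers $d_1,\dots,d_t$ for which all $2^t-1$ fields $\Q(\sqrt{d_I})$ are simultaneously $p$-rational.

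Next I would translate each of these $2^t-1$ conditions into a class-group statement via the reflection description of Theorem~\ref{cns}(iii). For a quadratic $\Q(\sqrt{d_I})$ with $\mu_p \not\subset \Q(\sqrt{d_I})$ (automatic for odd $p$, bar the single field $\Q(\sqrt{-3})$ when $p=3$), $p$-rationality amounts to a non-splitting condition at $p$ together with triviality of the $\omega$-component of the $p$-class group of $\Q(\sqrt{d_I},\mu_p)$; by Spiegelung this $\omega$-component is governed by class-group data of a mirror field, which for $p=3$ is literally the $p$-class group of $\Q(\sqrt{-3\,d_I})$ (Example~\ref{spiegel}). Thus the target becomes: choose the $d_i$ so that these $2^t-1$ mirror class groups are all trivial while the prescribed local conditions at $p$ hold.

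The argument I would then set up is a counting one. Under the Cohen--Lenstra--Martinet philosophy (Remark~\ref{remas}(d)), a quadratic field has trivial $p$-class group with some positive probability $c_p$, with $c_p \to 1$ as $p \to \infty$; modelling the $2^t-1$ mirror fields as independent gives a heuristic density $c_p^{\,2^t-1} > 0$ of good $t$-tuples. Summing over all admissible $t$-tuples with generators bounded by $X$ — of which there are of order $X^t$ up to logarithmic factors — the expected number of $p$-rational $(\Z/2\Z)^t$-fields is $\gg X^t\, c_p^{\,2^t-1}$, which for fixed $t$ and $p$ tends to infinity with $X$. So not merely existence but infinitude is predicted, and I would attempt to convert this into an unconditional lower bound through large-sieve or character-sum estimates for the simultaneous vanishing of the relevant $p$-torsion.

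The hard part will be the correlations. The $2^t-1$ mirror discriminants are built from the same $t$ generators and are therefore algebraically entangled, so one must control the \emph{joint} distribution of $p$-class groups over this structured family rather than treat the fields as independent — and the needed simultaneous-triviality input is exactly what current Cohen--Lenstra--Martinet technology does not supply unconditionally (even the single-field density $c_p$ is only heuristic away from the Davenport--Heilbronn range for $p=3$). A more modest unconditional route would be an inductive construction: starting from a good $(t-1)$-configuration, adjoin $\Q(\sqrt{d_t})$ subject to the $2^{t-1}$ fresh $p$-rationality conditions and produce $d_t$ by a positive-density argument — but this again rests on a quantitative simultaneous-vanishing statement that is precisely the open obstacle. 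For this reason I expect a full proof to be out of reach, and would fall back, as the paper does, on explicit computational verification for small $t$ (e.g.\ $t=5,6$ for $p=3$).
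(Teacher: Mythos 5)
This statement is Greenberg's conjecture, which the paper itself does not prove: its treatment consists of exactly the ingredients you use --- reduction, for odd $p$, to the $2^t-1$ quadratic subfields (Remark \ref{remas}(a)), mirror-field class-group conditions via reflection (Example \ref{spiegel}), Cohen--Lenstra--Martinet density heuristics giving probabilities of the shape $P_t \approx 0.60^{2^t-1}$, and explicit computational examples for $p=3$, $t=5$ and $t=6$. Your proposal is therefore correct in the only sense available and takes essentially the same route as the paper, including the honest identification that the joint (correlated) triviality of the mirror $p$-class groups is precisely what keeps an unconditional proof out of reach.
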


For the link between $n$ and $t$ and more information, see
\cite[Propositions 6.1.1 \& 6.2.2]{Gre}.
For simplicity, we shall discuss this conjecture for $p=3$
(for other cases, see \cite{BR}); 
then the $3$-rationality of a compositum $K$ of $t$
quadratic fields is equivalent to the following condition
(from Example \ref{spiegel}):

\smallskip
{\it For all the quadratic subfields $k =: \Q(\sqrt{d}) \ne \Q(\sqrt{-3})$ 
of $K$, the $3$-class group of the mirror field $k^* := \Q(\sqrt {-3\cdot d})$ 
is trivial and $3$ does not split in $k^*/\Q$.}

\smallskip
This needs $2^t-1$ conditions of $3$-rationality.

\subsection{Technical conditions} The case $t=1$ is clear
and gives infinitely many $3$-rational fields if we refer to classical
heuristics \cite{CL, CM}.
When $t\geq 3$ we must assume that $\Q(\sqrt{-3})$
is not contained in $K$, otherwise, from Theorem \ref{cns}\,(iii),
if $K_0$ is the inertia field of $3$ in $K/\Q$, then $[K : K_0]=2$,
$[K_0 : \Q]\geq 4$,
and necessarily $3$ splits in part in $K_0/\Q$ (the factor 
${\mathcal W}_K$ is non-trivial). 

\smallskip
For the biquadratic field $\Q(\sqrt d, \sqrt{-3})$, $d \not\equiv 0 \!\!\pmod 3$,
the $3$-rationality holds if and only if $d\equiv -1 \pmod 3$ and the 
$3$-class group of the imaginary field $\Q(\sqrt{d})$ or $\Q(\sqrt{-3\,d})$
is trivial (using Scholz's theorem). 
In the sequel, we shall assume that $K$ does not contain $\Q(\sqrt{-3})$
and that $t \geq 3$.

\smallskip
Let $K=\Q(\sqrt{d_1}, \ldots , \sqrt{d_t})$ be such that 
${\rm Gal}(K/\Q) \simeq (\Z/2\Z)^t$, $t\geq 3$. We denote by $k=\Q(\sqrt d)$
any quadratic subfield of $K$ and by $k^*$ the ``mirror field''
$\Q(\sqrt{-3 \cdot d})$. We then have the obvious lemma:

\begin{lemma} The conditions of non-splitting of $3$ in all
the extensions $k^*/\Q$, $k = \Q(\sqrt d) \subseteq K$, are satisfied 
if and only if for all the integers $d$ we have $3 \nmid d$ or 
$d \equiv 3 \pmod 9$.
\end{lemma}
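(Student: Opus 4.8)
The plan is to reduce everything to the classical decomposition law for the prime $3$ in a quadratic field and then carry out a short case analysis modulo $9$. Recall that for a squarefree integer $D$ the prime $3$ ramifies in $\Q(\sqrt{D})/\Q$ exactly when $3 \mid D$, and that when $3 \nmid D$ it splits if $D \equiv 1 \pmod 3$ and is inert if $D \equiv 2 \pmod 3$; thus ``$3$ does not split'' means $3$ ramifies or is inert. Since the statement only concerns the mirror fields $k^* = \Q(\sqrt{-3\,d})$, and these depend only on the squarefree part of $-3\,d$, I may assume throughout that $d$ is squarefree and apply the law to $D$ equal to the squarefree part of $-3\,d$, for each quadratic subfield $k = \Q(\sqrt{d}) \subseteq K$.

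First I would treat the case $3 \nmid d$. Here $3$ divides $-3\,d$ to exactly the first power, so $3$ divides the squarefree part of $-3\,d$; hence $3$ ramifies in $k^*/\Q$ and in particular does not split. This accounts for the first alternative in the stated condition.

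Next I would treat the case $3 \mid d$. Writing $d = 3\,e$ with $3 \nmid e$ (possible as $d$ is squarefree), we get $-3\,d = -9\,e$, so $k^* = \Q(\sqrt{-3\,d}) = \Q(\sqrt{-e})$ and $3$ is unramified in $k^*$. Applying the decomposition law to $-e$, the prime $3$ is inert precisely when $-e \equiv 2 \pmod 3$, that is $e \equiv 1 \pmod 3$, that is $d = 3\,e \equiv 3 \pmod 9$; and it splits precisely when $-e \equiv 1 \pmod 3$, that is $e \equiv 2 \pmod 3$, that is $d \equiv 6 \pmod 9$. Hence, when $3 \mid d$, the prime $3$ fails to split in $k^*$ exactly when $d \equiv 3 \pmod 9$.

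Combining the two cases, $3$ does not split in $k^*/\Q$ if and only if $3 \nmid d$ or $d \equiv 3 \pmod 9$; imposing this simultaneously over all quadratic subfields $k = \Q(\sqrt{d}) \subseteq K$ gives the lemma, and the conclusion is consistent with the description already recorded in Example~\ref{spiegel}. There is no genuine obstacle here — the lemma is elementary — and the only point requiring care is the reduction $\Q(\sqrt{-9\,e}) = \Q(\sqrt{-e})$ in the case $3 \mid d$, which shifts the relevant residue from modulus $3$ to modulus $9$ and thereby produces the two distinct alternatives appearing in the final condition.
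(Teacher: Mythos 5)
Your proof is correct, and it is exactly the reasoning the paper has in mind: the paper states this lemma as ``obvious'' with no written proof, relying on the same decomposition facts for $3$ in $\Q(\sqrt{-3d})$ already recorded in Example~\ref{spiegel} (ramification when $3\nmid d$, inertia when $d\equiv 3\pmod 9$, splitting when $d\equiv 6\pmod 9$). Your case analysis, including the normalization to squarefree $d$ and the reduction $\Q(\sqrt{-9e})=\Q(\sqrt{-e})$, supplies precisely the omitted verification.
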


\begin{corollary}\label{congruences}
(i) If $3$ is unramified in $K/\Q$,
then $K=\Q(\sqrt{d_1^{}}, \ldots , \sqrt{d_t^{}})$ with
$d_i \equiv \pm 1 \pmod 3$, for $i=1, \ldots , t$.

\smallskip
(ii) If $3$ is ramified in $K/\Q$ then $K$ is a direct compositum 
of the form $K_0\, \Q(\sqrt{3\,d_t})$, 
where $3$ is unramified in $K_0$ and $d_t \not \equiv 0 \pmod 3$,
whence $K=\Q(\sqrt{d_1^{}}, \ldots , \sqrt{d_{t-1}}, \sqrt{3 \, d_t^{}})$
with $d_i \equiv 1 \pmod 3$ for $i=1, \ldots, t$. 
\end{corollary}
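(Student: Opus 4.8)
The plan is to phrase everything through the subgroup $D \subseteq \Q^\times/(\Q^\times)^2$ generated by the classes of $d_1, \ldots, d_t$, so that $D \simeq (\Z/2\Z)^t$ and the nontrivial classes of $D$ correspond bijectively to the quadratic subfields $k = \Q(\sqrt d) \subseteq K$, each $d$ taken squarefree. First I would record the local fact that, for squarefree $d$, the prime $3$ is unramified in $\Q(\sqrt d)/\Q$ exactly when $3 \nmid d$; since in a multiquadratic field the prime $3$ ramifies iff it ramifies in one of the quadratic subfields, $3$ is unramified in $K/\Q$ iff $3 \nmid d$ for every class $d \in D$. I encode this by the homomorphism $\epsilon_3 \colon D \to \Z/2\Z$ carrying a squarefree $d$ to $v_3(d) \bmod 2$, so that $3$ is unramified in $K$ precisely when $\epsilon_3 = 0$ and ramified precisely when $\epsilon_3$ is onto. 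Throughout I work under the equivalent condition of the Lemma, i.e.\ every squarefree $d$ with $\Q(\sqrt d)\subseteq K$ satisfies $3\nmid d$ or $d\equiv 3\pmod 9$.

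For (i), suppose $3$ is unramified in $K/\Q$, i.e.\ $\epsilon_3 = 0$. Then each generator satisfies $3 \nmid d_i$, hence $d_i \equiv \pm 1 \pmod 3$, which is the assertion; here the non-splitting condition is automatic, since for every $d \in D$ the mirror $-3d$ carries exactly one factor of $3$, so $3$ ramifies, and a fortiori does not split, in $k^* = \Q(\sqrt{-3d})$.

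For (ii), suppose $3$ ramifies, so $\epsilon_3$ is surjective and $\ker\epsilon_3$ has index $2$. Let $K_0$ be the subfield attached to $\ker\epsilon_3$; since $3 \nmid d$ for all $d \in \ker\epsilon_3$, the prime $3$ is unramified in $K_0$ and $[K:K_0]=2$. Picking a basis $d_1, \ldots, d_{t-1}$ of $\ker\epsilon_3$ and any $\delta \notin \ker\epsilon_3$, and writing $\delta = 3 d_t$ with $d_t$ squarefree and prime to $3$, yields the direct compositum $K = K_0 \cdot \Q(\sqrt{3 d_t}) = \Q(\sqrt{d_1}, \ldots, \sqrt{d_{t-1}}, \sqrt{3 d_t})$, with $d_i \equiv \pm 1 \pmod 3$ already for $i \leq t-1$. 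The decisive step is to impose the Lemma's congruence on every class of $D$ divisible by $3$. Any such class is $\delta \cdot \prod_{i \in S} d_i$ for some $S \subseteq \{1, \ldots, t-1\}$, and its squarefree representative has the form $3m$ with $m$ prime to $3$ and $m \equiv d_t \prod_{i \in S} d_i \pmod 3$ (the square factor removed is prime to $3$, hence $\equiv 1 \pmod 3$). The condition $3m \equiv 3 \pmod 9$ then reads $m \equiv 1 \pmod 3$, so $d_t \prod_{i \in S} d_i \equiv 1 \pmod 3$ for all $S$. Taking $S = \emptyset$ gives $d_t \equiv 1 \pmod 3$, and $S = \{i\}$ then gives $d_i \equiv 1 \pmod 3$, establishing $d_i \equiv 1 \pmod 3$ for all $i = 1, \ldots, t$.

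I expect the only genuine care to lie in this last step: one must feed the modulus-$9$ condition into all $2^{t-1}$ classes divisible by $3$ rather than the single generator $3 d_t$, and must reduce squarefree representatives modulo $9$ correctly—the clean point being that the lone factor $3$ separates off while the discarded square is prime to $3$ and thus invisible modulo $3$, collapsing the condition to $d_t\prod_{i\in S}d_i\equiv 1\pmod 3$. The remaining ingredients—the local ramification criterion, the well-definedness and surjectivity of $\epsilon_3$, and the linear disjointness giving the direct compositum—are routine.
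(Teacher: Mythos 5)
Your proof is correct and follows essentially the same route as the paper: you split $K$ along the kernel of the $3$-valuation map (the paper's $K_0$ and $\Q(\sqrt{3\,d_t})$), then impose the ramification/congruence condition on all $2^{t-1}$ classes $3\,d_t\prod_{i\in S}d_i$ and specialize $S=\emptyset$ and $S=\{i\}$, exactly as in the paper's argument with $d \in \langle d_1,\ldots,d_{t-1}\rangle\cdot \Q^{\times 2}$. The only cosmetic difference is that you invoke the Lemma's $d\equiv 3 \pmod 9$ criterion while the paper phrases the same condition as non-splitting of $3$ in the mirror field $k^*=\Q(\sqrt{-d_t\,d})$; these are equivalent by the Lemma itself.
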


Case (ii) comes from the fact that for any 
$d \in \langle d_1, \ldots , d_{t-1} \rangle\cdot \Q^{\times 2}$,
$d \not \equiv 0 \pmod 3$,
the consideration of the subfield $k=\Q(\sqrt{3 \,d_t \,d})$ needs, for 
$k^*=\Q(\sqrt{- d_t \,d})$, the condition $d_t \, d \equiv 1 \pmod 3$,
whence the hypothesis as soon as $t \geq 3$.

\medskip
So whatever the form of $K$, the results depend on heuristics on the $3$-class
groups of the $2^t-1$ fields $k^*$ when random integers $d_i$ are
given with the above conditions.

\smallskip
A first heuristic is to assume that a $3$-class group does not depend on 
the above assumption of decomposition of the prime $3$; this is natural
since only genera theory (i.e., when $p=2$) is concerned with such problem.
Of course, all the $2^t-1$ integers $d$ are not random, but the classical
studies of repartition of class groups show that the two phenomena
(an integer $d$ and a $3$-class group) are independent.

\subsection{Densities of $3$-rational quadratic fields}${}$
We give now densities of $3$-rational quadratic fields $K=\Q(\sqrt d)$. 
There are two algorithms for this: to use the general Program I of 
\S\,\ref{prog2} or to use the characterization given by Example \ref{spiegel}. 
It is easy to see that Program I takes the same time 
for real or imaginary fields and that the characterization using
the computation of class numbers of the mirror fields $K^*$
is faster for real $K$. 

\smallskip
We shall compare with the heuristics of
Cohen--Lenstra--Martinet (see \cite{CL, CM}).

\smallskip
So we obtain the following programs in which
$N$ (resp. $N_3$) is the number of squarefree integers $d$ (resp.
of $3$-rational fields $\Q(\sqrt d)$) in the interval:

\subsubsection{Real case for $K$} 
The mirror field of $K=\Q(\sqrt d)$ is $K^*=\Q(\sqrt {-3\,d})$ and 
we must have $d \equiv 3 \pmod 9$ when $3 \mid d$, otherwise
$3$ splits in $K(\mu_3^{})/K$ and ${\mathcal W}_K \ne 1$.

\smallskip
\footnotesize
\begin{verbatim}
{N=0;N3=0;for(d=1,10^3,if(core(d)!=d || Mod(d,9)==-3,next);N=N+1;
v=valuation(d,3);D=coredisc(-3^(1-2*v)*d);h=qfbclassno(D);
if(Mod(h,3)!=0,N3=N3+1));print(N3," ",N," ",N3/N+0.0)}
N3=315551, N=531938, N3/N=0.59321
\end{verbatim}

\normalsize\smallskip
If we restrict ourselves to integers $d \equiv 3 \pmod 9$,
we obtain $N_3=180717$, $N= 303961 $ and the
proportion $0.59454$. With the condition $3 \nmid d$
(corresponding to the non-ramification of $3$ in $K$) we obtain
the similar proportion $0.59185$.

\smallskip
This is consistent with the heuristics of Cohen--Martinet, about $3$-class 
groups (see \cite[Section 2, \S\,1.1 (b)]{CM} or \cite[\S\,9\,(b)]{CL}), giving
the probability $0.439874$ for $3 \mid \order \Cl_{k^*}$, whence $0.560126$ for 
the contrary.

\subsubsection{Imaginary case for $K$}
The program uses the computation of the ray class group of modulus $9$
(see Program I of \S\,\ref{prog2}):

\smallskip
\footnotesize
\begin{verbatim}
{N=0;N3=0;for(dd=1,10^6,d=-dd;if(core(d)!=d || Mod(d,9)==-3,next);N=N+1;
p=3;n=2;r=2;P=x^2-d;K=bnfinit(P,1);Kpn=bnrinit(K,p^n);
Hpn=component(component(Kpn,5),2);L=listcreate;e=component(matsize(Hpn),2);
R=0;for(k=1,e,c=component(Hpn,e-k+1);if(Mod(c,p)==0,R=R+1;
listinsert(L,p^valuation(c,p),1)));if(R==r,N3=N3+1));
print(N3," ",N," ",N3/N+0.0)}
N3=462125,N=531934,N3/N=0.868
\end{verbatim}

\normalsize\smallskip
This is in accordance with the probabilities about $3$-class groups 
in the real case giving the probability $0.841$ for $3 \nmid \order \Cl_{k^*}$
(see \cite[Section 2, \S\,1.2 (b)]{CM}).

\smallskip
Thus, there are approximatively $60 \%$ of $3$-rational real quadratic fields
and around $86 \%$ of $3$-rational imaginary quadratic fields.
So we do not need more precise information, but we must only assume
that $3$-rational quadratic fields are uniformly distributed whatever
the interval of range of the discriminants and that the probabilities are
around $0.60$ (resp. $0.86$) for our reasonnings. 
For more precise numerical results and heuristics, see \cite[\S\,5]{BR}, 
\cite[Section 5]{PV} and the very complete work on imaginary 
quadratic fields \cite{PS} and all $p\geq 2$.

\subsection{Analyse of Greenberg's conjecture}

We denote by $K_t$, $t \geq 1$, a compositum of one of the two forms:
$$\hbox{$K_t = \Q(\sqrt{d_1^{}}, \ldots , \sqrt{d_t^{}})$, with $3 \nmid d_i$ for all $i$,} $$ 
\centerline{$K_t=\Q(\sqrt{d_1^{}}, \ldots , \sqrt{d_{t-1}}, \sqrt{3 \cdot d_t^{}})$,
with $d_i \equiv 1 \pmod 3$ for all $i$.}

\medskip
From the above heuristics, we can say that for $t=1$ there are reasonably
infinitely many such $3$-rational $K_1$ with density $P_1$ less than 
$0.6$ or $0.86$.

\smallskip
Now take $t \geq 3$; since $K_t$ is $3$-rational if and only if its $2^t-1$
quadratic subfields are $3$-rational, the probability may be assumed to be
as follows, under the assumption that all the generators $d_1,\ldots,d_t$
are random under the necessary local conditions of 
Corollary \ref{congruences}:

\smallskip
\quad $\bullet$ If $K_t$ is real, the probability is:
$$P_t \approx 0.60^{2^{t}-1}$$
(giving $1.33 \cdot 10^{-7}$ for $t=5$, $1.06 \cdot 10^{-14}$ for $t=6$ and
$1.12 \cdot 10^{-227}$ for $t=10$). 

\smallskip
\quad $\bullet$ If $K_t$ is imaginary, we have $2^{t-1}-1$ real
quadratic subfields and $2^{t-1}$ imaginary ones, so that the probability is~:
$$P_t \approx 0.60^{2^{t-1}-1} \cdot 0.86^{2^{t-1}}$$
(giving $4.2 \cdot 10^{-5}$ for $t=5$, $1.06 \cdot 10^{-9}$ for $t=6$ and
$1.25 \cdot 10^{-147}$ for $t=10$).

\smallskip
This explains the great difficulties to find numerical examples, for $t>6$
with $p=3$, in a reasonable interval $[b, B]$ for $d_1, \ldots,d_t$.
Recall the imaginary example with $t=6$ given in \cite[\S\,4.2]{Gre}:
$\Q(\sqrt{-1}, \sqrt{13}, \sqrt{145}, \sqrt{209}, \sqrt{269}, \sqrt{373})$
(we shall give another similar example in \S\,\ref{imaginary}).

\smallskip
Naturally, when $p$ increases, it is easier to find examples with larger $t$
but the problem is similar. See \cite{BR} for some examples with $p\geq 5$
with quadratic fields (e.g., Table 1 for $\Q(\sqrt{2}, \sqrt{3}, \sqrt{11}, 
\sqrt{47}, \sqrt{97})$ with $p=5$) and cubic fields, then \cite{Gr2, Gr6, HZ}
for $p$-adic regulators. In the framwork of Borel--Cantelli classical heuristic, 
the conjecture of Greenberg may be true.

\section{Computations for compositum of quadratic fields}

\subsection{Research of real $3$-rational compositum of $5$ quadratic fields}
\label{real}

For practical reasons, we write two programs depending on the ramification of
$3$ in the fields $K=\Q(\sqrt{d_1}, \sqrt{d_2}, \sqrt{d_3}, \sqrt{d_4}, \sqrt{d_5})$.
The two programs verify, from the data, that ${\rm Gal}(K/\Q) \simeq (\Z/2\,\Z)^5$.

\subsubsection{Case where $3$ is possibly ramified} 
The program computes a list 
of integers $d>0$ such that $d \equiv 1 \pmod 3$ or $d \equiv 3 \pmod 9$
(see Corollary \ref{congruences}) and such that the $\Q(\sqrt{d})$ are $3$-rational
(then $3$ is possibly unramified in some solutions $K$, but in that case $d_i
\equiv 1 \pmod 3$ for all $i$).

\smallskip\footnotesize
\begin{verbatim}
{L3=listcreate;N3=0;b=1;B=300;for(d=b,B,
if(core(d)!=d || Mod(d,3)==-1 || Mod(d,9)==-3,next);v=valuation(d,3);
D=coredisc(-3^(1-2*v)*d);h=qfbclassno(D);if(Mod(h,3)!=0,N3=N3+1;
listinsert(L3,d,1)));for(k=1,10^7,a1=random(N3)+1;a2=random(N3)+1;
a3=random(N3)+1;a4=random(N3)+1;a5=random(N3)+1;
d1=component(L3,a1);d2=component(L3,a2);d3=component(L3,a3);
d4=component(L3,a4);d5=component(L3,a5);TT=0;
for(e1=0,1,for(e2=0,1,for(e3=0,1,for(e4=0,1,for(e5=0,1,
dd=d1^e1*d2^e2*d3^e3*d4^e4*d5^e5;dd=core(dd);
if(dd==1 & [e1,e2,e3,e4,e5]!=[0,0,0,0,0],TT=1;break(5)))))));
if(TT==0,T=0;for(e1=0,1,for(e2=0,1,for(e3=0,1,for(e4=0,1,for(e5=0,1,
dd=d1^e1*d2^e2*d3^e3*d4^e4*d5^e5;dd=core(dd);
if(Mod(dd,3)!=0,D=coredisc(-3*dd));if(Mod(dd,3)==0,D=coredisc(-dd/3));
h=qfbclassno(D);if(Mod(h,3)==0,T=1;break(5)))))));
if(T==0,print(d1," ",d2," ",d3," ",d4," ",d5))))}
\end{verbatim}

\normalsize\smallskip
One obtains the following distinct examples:

\smallskip
\footnotesize
\begin{verbatim}
[d1,d2,d3,d4,d5]=
[118,178,31,46,211],[274,291,66,118,262],[22,193,13,262,163],
[37,274,31,211,46],[31,130,166,129,246],[298,13,7,111,210],
[201,157,57,55,219],[255,282,165,298,118],[19,211,61,166,217],
[39,30,129,111,166],[187,246,39,145,31],[66,265,246,219,157],
[55,219,217,102,205],[193,262,163,10,127],[37,61,273,205,21],
[255,115,259,193,7],[31,187,145,246,39]
\end{verbatim}

\normalsize
\subsubsection{Case where $3$ is unramified}
In this case there is no condition on the $d_i \not \equiv 0 \pmod 3$.

\smallskip\footnotesize
\begin{verbatim}
{L3=listcreate;N3=0;b=2;B=300;for(d=b,B,if(core(d)!=d||Mod(d,3)==0,next);
D=coredisc(-3*d);h=qfbclassno(D);if(Mod(h,3)!=0,N3=N3+1;listinsert(L3,d,1)));
for(k=1,10^7,a1=random(N3)+1;a2=random(N3)+1;a3=random(N3)+1;a4=random(N3)+1;
a5=random(N3)+1;d1=component(L3,a1);d2=component(L3,a2);d3=component(L3,a3);
d4=component(L3,a4);d5=component(L3,a5);TT=0;
for(e1=0,1,for(e2=0,1,for(e3=0,1,for(e4=0,1,for(e5=0,1,
dd=d1^e1*d2^e2*d3^e3*d4^e4*d5^e5;dd=core(dd);
if(dd==1 & [e1,e2,e3,e4,e5]!=[0,0,0,0,0],TT=1;break(5)))))));
if(TT==0,T=0;for(e1=0,1,for(e2=0,1,for(e3=0,1,for(e4=0,1,for(e5=0,1,
dd=d1^e1*d2^e2*d3^e3*d4^e4*d5^e5;dd=core(dd);D=coredisc(-3*dd);
h=qfbclassno(D);if(Mod(h,3)==0,T=1;break(5)))))));
if(T==0,print(d1," ",d2," ",d3," ",d4," ",d5))))}
\end{verbatim}

\normalsize\smallskip
One obtains the following examples:

\smallskip\footnotesize
\begin{verbatim}
[d1,d2,d3,d4,d5]=
[91,230,209,194,221],[149,335,301,55,31],[145,157,230,209,194],
[35,193,301,149,263],[226,239,130,158,259],[190,259,143,70,290],
[266,59,17,10,70],[194,11,283,187,31],[107,227,34,130,230],
[13,17,215,31,61],[145,133,218,34,230],[22,215,221,31,161],
[86,149,170,146,145],[158,259,226,146,47],[146,26,269,166,190],
[46,170,38,133,187],[190,119,97,14,263],[203,227,221,194,143],
[239,89,262,53,166],[13,262,193,163,286]
\end{verbatim}

\normalsize\smallskip
In a larger interval, the examples become more rare
since the number of discriminants decrease; between
$b=10^3$ and $B=10^3+350$ we get the solutions (when $3$ can ramify):

\smallskip
\footnotesize
\begin{verbatim}
[d1,d2,d3,d4,d5]=
[1245,1303,1218,1291,1123],[1177,1003,1309,1173,1054],
[1321,1065,1173,1231,1207],[1155,1326,1111,1105,1093],
[1173,1281,1254,1327,1209],[1030,1174,1177,1218,1015]
\end{verbatim}

\normalsize
\subsubsection{Direct verifications}
To verify, one may compute directly the $3$-class number and the $3$-adic 
logarithm of the fundamental unit $\varepsilon$ of $\Q(\sqrt d)$ for each 
quadratic subfield of $K$; the computation of the normalized regulator
(using \cite[Proposition 5.2]{Gr2}) is equivalent to that of
$\alpha := \Frac{\varepsilon^q-1}{3}$ when $3$ is unramified 
(where $q=2$ or $8$), and $\alpha := \Frac{\varepsilon^2-1}{\sqrt d}$ 
when $3 \mid d$, and $\alpha$ must be a $3$-adic unit (which can 
be seen taking its norm):

\smallskip
\footnotesize
\begin{verbatim}
{L3=[110,170,161,38,14];d1=component(L3,1);d2=component(L3,2);
d3=component(L3,3);d4=component(L3,4);d5=component(L3,5);T=0;
for(e1=0,1,for(e2=0,1,for(e3=0,1,for(e4=0,1,for(e5=0,1,
dd=d1^e1*d2^e2*d3^e3*d4^e4*d5^e5;dd=core(dd);if(dd==1,next);
D=coredisc(dd);h=qfbclassno(D);eps=quadunit(D);q=8;if(Mod(dd,3)!=-1,q=2);
E=eps^q-1;if(Mod(D,3)!=0,No=norm(E)/9);if(Mod(D,3)==0,No=norm(E)/3);
No=Mod(No,3);print(dd," ",Mod(h,9)," ",No))))))}
\end{verbatim}

\normalsize\smallskip\noindent
giving, for $\Q(\sqrt{110}, \sqrt{170}, \sqrt{161}, \sqrt{38}, \sqrt{14})$
(computation of $h$ modulo $9$ for information instead of modulo $3$):

\smallskip
\footnotesize
\begin{verbatim}
d       h mod 9      Regulator       d       h mod 9      Regulator
14      Mod(1,9)     Mod(1,3)        385     Mod(2,9)     Mod(2,3)
38      Mod(1,9)     Mod(1,3)        1045    Mod(4,9)     Mod(2,3)
133     Mod(1,9)     Mod(2,3)        14630   Mod(8,9)     Mod(1,3)
161     Mod(1,9)     Mod(1,3)        17710   Mod(8,9)     Mod(2,3)
46      Mod(1,9)     Mod(2,3)        1265    Mod(2,9)     Mod(1,3)
6118    Mod(4,9)     Mod(2,3)        168245  Mod(8,9)     Mod(1,3)
437     Mod(1,9)     Mod(1,3)        48070   Mod(8,9)     Mod(2,3)
170     Mod(4,9)     Mod(1,3)        187     Mod(2,9)     Mod(2,3)
595     Mod(4,9)     Mod(2,3)        2618    Mod(4,9)     Mod(1,3)
1615    Mod(4,9)     Mod(2,3)        7106    Mod(4,9)     Mod(1,3)
22610   Mod(8,9)     Mod(1,3)        24871   Mod(8,9)     Mod(2,3)
27370   Mod(7,9)     Mod(2,3)        30107   Mod(8,9)     Mod(1,3)
1955    Mod(4,9)     Mod(1,3)        8602    Mod(4,9)     Mod(2,3)
260015  Mod(5,9)     Mod(1,3)        1144066 Mod(7,9)     Mod(2,3)
74290   Mod(8,9)     Mod(2,3)        81719   Mod(8,9)     Mod(1,3)
110     Mod(2,9)     Mod(1,3)
\end{verbatim}

\normalsize
\subsection{Research of imaginary $3$-rational compositum}
\label{imaginary}
The programs written in \S\,\ref{real} are valid for any interval 
$[b, B]$ in $\Z$ and we find, as expected, more solutions that in the real case
for $t=5$. 

\smallskip
Give only an example since the calculations are
different for imaginary quadratic subfields.

\smallskip
Let $K=\Q(\sqrt{-1}, \sqrt{7}, \sqrt{10}, \sqrt{13}, \sqrt{37})$; then
a monic polynomial defining $K$ is:

\smallskip\footnotesize
\begin{verbatim}
x^32-1056*x^30+480032*x^28-124184704*x^26+20397674176*x^24
-2244202678784*x^22+169874210289152*x^20-8952078632101888*x^18
+329969412292171264*x^16-8547361273484173312*x^14
+156988254584490745856*x^12-2053956312746026434560*x^10
+19066991321006131953664*x^8-123357558863823312388096*x^6
+535739176635907164471296*x^4-1443775880343438717616128*x^2
+1984177860024815997485056
\end{verbatim}

\normalsize\smallskip\noindent
and the Program I of \S\,\ref{prog2} confirms the $3$-rationality.

\medskip
We also find a new example with $t=6$:
$$K=\Q(\sqrt{-2}, \sqrt{-5}, \sqrt{7}, \sqrt{17}, \sqrt{-19}, \sqrt{59}). $$
Such examples are very rare and it is probably hopeless to find a
numerical example with $t=7$ in a reasonable interval of discriminants.

\smallskip
To verify the $3$-rationalities, we use the program of the above subsection, 
noting that for imaginary quadratic subfields $k$ there is no unit $\varepsilon$ 
and that its $3$-class group may be non-trivial, but in this case the $3$-Hilbert 
class field must be contained in $\wt k$ which gives interesting 
examples of such phenomena; indeed, this fact is not obvious without 
explicit knowledge of generators of the $3$-classes 
and we can use directly the Program I of $p$-rationality for each $k$ such that 
$h\equiv 0 \pmod 3$. 

\smallskip
\footnotesize
\begin{verbatim}
{L3=[70,59,-118,-19,17,-14];d1=component(L3,1);d2=component(L3,2);
d3=component(L3,3);d4=component(L3,4);d5=component(L3,5);d6=component(L3,6);
T=0;for(e1=0,1,for(e2=0,1,for(e3=0,1,for(e4=0,1,for(e5=0,1,for(e6=0,1,
dd=d1^e1*d2^e2*d3^e3*d4^e4*d5^e5*d6^e6;dd=core(dd);
if(dd==1,next);D=coredisc(dd);h=qfbclassno(D);
if(D>0,eps=quadunit(D);q=8;if(Mod(dd,3)!=-1,q=2);E=eps^q-1;
if(Mod(D,3)!=0,No=norm(E)/9);if(Mod(D,3)==0,No=norm(E)/3);No=Mod(No,9));
if(D<0,No=X);print(dd," ",3^valuation(h,3)," ",No)))))))}
\end{verbatim}

\normalsize\smallskip

\footnotesize
\begin{verbatim}
 d        h     No                 d         h     No
-14       1     X                  -5        1     X
17        1     Mod(1,3)           1190      1     Mod(1,3)
-238      1     X                  -85       1     X
-19       1     X                  -1330     3     X     
266       1     Mod(1,3)           95        1     Mod(1,3)
-323      1     X                  -22610    1     X
4522      1     Mod(2,3)           1615      1     Mod(2,3)
-118      3     X                  -2065     3     X    
413       1     Mod(1,3)           590       1     Mod(1,3)
-2006     3     X                  -35105    1     X
7021      1     Mod(2,3)           10030     1     Mod(2,3)
2242      1     Mod(2,3)           39235     1     Mod(2,3)
-7847     1     X                  -11210    1     X
38114     1     Mod(1,3)           666995    1     Mod(1,3)
-133399   3     X                  -190570   1     X
59        1     Mod(1,3)           4130      1     Mod(1,3)
-826      3     X                  -295      1     X
1003      1     Mod(2,3)           70210     1     Mod(2,3)
-14042    1     X                  -5015     1     X
-1121     1     X                  -78470    9     X   
15694     1     Mod(2,3)           5605      1     Mod(2,3)
-19057    1     X                  -1333990  1     X
266798    1     Mod(1,3)           95285     1     Mod(1,3)
-2        1     X                  -35       1     X
7         1     Mod(2,3)           10        1     Mod(2,3)
-34       1     X                  -595      1     X
119       1     Mod(1,3)           170       1     Mod(1,3)
38        1     Mod(1,3)           665       1     Mod(1,3)
-133      1     X                  -190      1     X
646       1     Mod(2,3)           11305     1     Mod(2,3)
-2261     9     X                  -3230     9     X   
70        1     Mod(2,3)  
\end{verbatim}

\normalsize\smallskip
From this table, we deduce that the $3$-class group of $K$
is isomorphic to $(\Z/9\,\Z)^3 \times (\Z/3\,\Z)^6$ and that the $3$-Hilbert 
class field of $K$ is contained in the compositum of the $16$ 
independent $\Z_3$-extensions of $K$ distinct from the cyclotomic one
(i.e., the compositum of the $16$ ``anti-cyclotomic'' ones).

\section{Number fields $p$-rational for all $p \geq 2$}

As soon as there exist units of infinite order in $K$,
the $p$-rationality is a very difficult question and {\it a fortiori}
the existence of such fields, $p$-rational for all $p$. 
So it remains to consider the fields
such that $r_1+r_2-1=0$, which characterizes $K=\Q$ (which is
$p$-rational for all $p$) and the imaginary quadratic fields for which
we recall some history.

\subsection{The $p$-rationality of imaginary quadratic fields}\label{all}
After a work by Onabe \cite{O} (with a correction to be made in the case $p=2$), 
the subject was studied by Angelakis and Stevenhagen 
\cite[Theorem 4.4]{AS} in a different 
(but essentially equivalent) setting. Indeed, it is 
immediate to see that to have a {\it minimal absolute abelian Galois 
group, isomorphic to $\widehat \Z^2 \times \prd_{n\geq 1} \Z/n\,\Z$}, 
is equivalent, for the  imaginary quadratic field $K$, to be 
$p$-rational for all $p$, what we have explained and generalized 
for any number field in \cite[\S\,1.1]{Gr5}.

\medskip
Let $K=\Q(\sqrt {-d})$ be an imaginary quadratic field and let $p$ be any 
prime number. Then $K$ is $p$-rational if and only if ${\mathcal W}_K=1$ 
and the $p$-Hilbert class field $H_K$ is contained in $\wt K$.

\smallskip
For $p=2$, the conditions are given in Example \ref{p=2} 
about complex and real abelian fields of degree a power of $2$. 
For $p=3$, ${\mathcal W}_K \simeq \Z/3\Z$ for $-d\equiv -3 \pmod 9$
and $-d \ne -3$.
Then for $p>3$, we get ${\mathcal W}_K=1$, but there is no immediate
criterion for the inclusion $H_K \subset \wt K$ and we must use for 
instance the usual numerical computations of \S\,\ref{prog2}.
We note that in \cite[Table 1, \S\,7]{AS} one has many examples
of non-$p$-rational fields $K$ whose $p$-class group is of order $p$ 
($2 \leq p \leq 97$), in complete agreement with our PARI Program I.

\smallskip
The Conjecture 7.1 of \cite{AS} may be translated into the similar one~:
$$\hbox{\it There are infinitely many imaginary quadratic fields, $p$-rational for all $p$.}$$
Indeed, for $K$ fixed, the class group is in general cyclic (from Cohen--Lenstra--Martinet
heuristics) and for each $p$-class group the inclusion of the $p$-Hilbert class field
in $\wt K$ depends on $p$-adic values of logarithms of ideals generating the
$p$-classes (see Section \ref{def} and Remark \ref{remalog} below), 
whose probabilities are without mystery;
but no direction of proof is known and the fact that the set of prime divisors 
of the class numbers increases as $d \to \infty$ implies some rarefaction 
of such fields:

\smallskip
In $[b, B]=[1,10^6]$ the proportion is $0.0766146$
($46576$ solutions for $607926$ fields), in $[b, B]=[10^6,10^6+10^5]$ it is
$0.0697357$, in $[10^9;10^9+10^5]$ it is $0.0454172$, and in
$[10^{11}, 10^{11}+10^5]$ it is $0.0379389$ ($2306$ solutions for $60782$
fields).

\medskip
The following program gives very quickly, in any interval $[b, B]$, the set 
of imaginary quadratic fields which are $p$-rational for all $p \geq 2$; 
for this, it verifies the $p$-rationalities for $p=2$, $3$, and when $p$
divides the class number:

\smallskip
\footnotesize
\begin{verbatim}
{b=1;B=150;Lrat=listcreate;m=0;for(d=b,B,if(core(d)!=d,next);P=x^2+d;
K=bnfinit(P,1);h=component(component(component(K,8),1),1);
hh=component(factor(6*h),1);t=component(matsize(hh),1);
for(k=1,t,p=component(hh,k);n=2;if(p==2,n=3);Kpn=bnrinit(K,p^n);
Hpn=component(component(Kpn,5),2);e=component(matsize(Hpn),2);
R=0;for(j=1,e,c=component(Hpn,e-j+1);if(Mod(c,p)==0,R=R+1));
T=0;if(R>2,T=1;break));if(T==0,m=m+1;listinsert(Lrat,-d,m)));print(Lrat)}

[-1,-2,-3,-5,-6,-10,-11,-13,-19,-22,-26,-29,-37,-38,-43,-53,-58,-59,-61,
-67,-74,-83,-86,-101,-106,-109,-118,-122,-131,-134,-139,-149] ...
[-1000058,-1000099,-1000117,-1000133,-1000138,-1000166,-1000211,-1000213,
-1000214,-1000253,-1000291,-1000333,-1000357,-1000358,-1000381,-1000394 ...
[-10000000019,-10000000058,-10000000061,-10000000069,-10000000118,
-10000000147,-10000000198,-10000000277,-10000000282,-10000000358 ...
[-123456789062,-123456789094,-123456789133,-123456789194,-123456789322,
-123456789403,-123456789419,-123456789451,-123456789563,-123456789587 ...
\end{verbatim}

\normalsize
\begin{remark} \label{remalog}
Let $K$ be an imaginary quadratic field and $p>3$; the ${\rm Log}$ 
function is nothing else than the usual logarithm. 
Then, using the property relying on the characterization 
``${\mathcal T}_K = {\rm Ker}({\rm Log})$'', {\it $K$ is not $p$-rational} as soon as
there exists an ideal ${\mathfrak a}$, whose class is of order $p^e \ne 1$, such that 
${\rm log} ({\mathfrak a}) \in {\rm log}(U_K)$ which is equivalent to
$\alpha = \xi \cdot u^{p^e}$, where ${\mathfrak a}^{p^e}=(\alpha)$
and $\xi \in \bigoplus_{{\mathfrak p} \mid p} \mu_{p^f-1}^{}$
where $f \in \{1,2\}$ is the residue degree of $p$.

\smallskip
For instance, in $K=\Q(\sqrt{-383})$, for $p=17$, the $p$-class group is
generated by the class of ${\mathfrak l} \mid 2$ such that
${\mathfrak l}^{17} = (\alpha)$ where $\alpha = \frac{711+7\,\sqrt{-383}}{2}$
and ${\rm log}(\alpha) \equiv 0 \pmod {17^2}$; since ${\rm log}(U_K) =
17\, (\Z_{17}\, \oplus\, \Z_{17} \,\sqrt{-383})$, $K$ is not $17$-rational.

\smallskip
So it should be easy to elaborate PARI programs using this point of view.
\end{remark}

\subsection{Application to a conjecture of Hajir--Maire}

In \cite[Conjecture 0.2]{HM} is proposed the following 
conjecture as a sufficient condition to construct
extensions of number fields whose Galois group is a suitable 
uniform pro-$p$-group with arbitrary large Iwasawa $\mu$-invariant:

\smallskip
{\it Given a prime $p$ and an integer $m \geq 1$, coprime to $p$, there exist a
totally imaginary field $K_0$ and a degree $m$ cyclic extension 
$K/K_0$ such that $K$ is $p$-rational.}

\smallskip
and it is conjectured that the statement is true taking for $K_0$ an
imaginary $p$-rational quadratic field \cite[Conjecture 4.16]{HM}.

\subsubsection{Analysis of the conjecture}
Under Leopoldt's conjecture, we have seen that $K_0$ must be 
itself $p$-rational, so the computations in \S\,\ref{all}
show that, in practice, we may fix $K_0$ to be any imaginary 
quadratic field, $p$-rational for all $p$, since they are very numerous. 
Then one has only to choose $p$ and $m$ and find a degree $m$ cyclic 
$p$-rational extension of $K_0$.

\smallskip
Consider such a field $K_0=\Q(\sqrt{-d})$;
we know that its absolute abelian Galois group
${\rm Gal}(K^{\rm ab}/K)$ is isomorphic to
$\, \widehat \Z^2 \times \prd_{n\geq 1} \Z/n\,\Z$,
giving a countable basis of cyclic extensions
of degree $m$, coprime to $p$, and the defect of the conjecture
would say that {\it any} degree $m$ cyclic extension $K$ of $K_0$ 
is non-$p$-rational (this coming essentialy from the $p$-class group 
and/or the normalized $p$-adic regulator of $K$, since 
one can easily realize ${\mathcal W}_K=1$ for infinitely many $K$). 

\smallskip
The Cohen--Lenstra--Martinet heuristics \cite{CL, CM} show that 
infinitely many cyclic extensions of degree $m$ may have trivial 
$p$-class group, so that we must focus on the 
case of $p$-adic properties of units of such fields.

\smallskip
For simplicity, we restrict ourselves to extensions 
$K/K_0$ unramified at $p$ since in the non-$p$-part of
$K^{\rm ab}/K$, the inertia groups at the $p$-places are finite
and it remains infinitely many choices.

\smallskip
Let $G = {\rm Gal}(K/K_0) \simeq \Z/m\Z$; then 
$E_K/\mu_K^{}$ (still denoted by abuse $E_K$) is a $G$-module 
of $\Z$-rank $m-1$ such that ${\rm N}_{K/K_0}(E_K)=1$,
where the norm ${\rm N}_{K/K_0}$ may also be understood as 
the ``algebraic norm'' in $\Z[G]$.
It is well known, from generalized Herbrand's theorem 
(e.g., \cite[Lemma I.3.6]{Gr1}), that there exists a ``Minkowski unit'' 
$\eta \in E_K$ generating a $\Z[G]$-module $E'_K$ of prime to 
$p$ index in $E_K$; so $E'_K$ is a monogenic 
$\Z[G]/({\rm N}_{K/K_0})$-module.

\smallskip
Thus ${\mathcal R}_K$ is $p$-adically equivalent to a
Frobenius determinant, product of components, indexed with
the $p$-adic characters $\theta \ne 1$ of $G$, of the form
${\rm Reg}_p^\theta(\eta) = \prod_{\varphi \div \theta}{\rm Reg}_p^\varphi(\eta)$
where $\varphi$ runs trough a set of $\Q_p$-conjugates of some 
irreducible characters of $G$.

\smallskip
The field of values of $\varphi$ only depend on the 
rational character $\chi$ such that $\varphi \mid \theta \mid \chi$
and is denoted $C_\chi$; thus the ${\rm Reg}_p^\varphi(\eta)$ are 
$C_\chi$-linear combinations
of terms of the form $\frac{1}{p}\,{\rm log}(\eta^\sigma)$, $\sigma \in G$,
and the probabilities of ${\rm Reg}_p^\theta(\eta) \equiv 0 \pmod p$
depend on the residue degree $f$ of $p$ in $C_\chi$ and are conjecturaly
at most in $\frac{O(1)}{p^f}$ \cite[Section 4, \S\,4.1]{Gr3}.
Then each case ($p$ being fixed) strongly depends 
on the selected value of $m$, but the probability of non-$p$-rationality 
is a constant $\pi(p,m)$ whatever the choice of $K$.

\smallskip
This gives an incredible probability to have ${\mathcal R}_K \equiv 0 \pmod p$ 
for all these cyclic fields of degree $m$; moreover this concerns 
$K_0$ fixed among (conjecturally) infinitely many fields.

\begin{remark} The above reasoning is valid if we take $K$ as the compositum
of $K_0$ with a real cyclic extension $K_1$ of degree $m \not\equiv 0 \pmod p$ 
of $\Q$. If for instance $m$ is odd, $K$ is $p$-rational if and only if 
$K_1$ is $p$-rational and $\Cl_K^\infty=1$ since 
${\mathcal R}_K={\mathcal R}_{K_1}=1$.
So in practice, varying $K_1$, we are certain to obtain, numerically, a solution.
To prove that it is always possible is not yet accessible since we do not have 
any example, for each $m$, of a field $K_1$, $p$-rational for all $p$
(which is not sufficient to get the $p$-rationality of $K$).

\smallskip
We shall illustrate the two contexts ($K/\Q$ non-abelian and $K=K_0 K_1$).
\end{remark}

\subsubsection{Numerical examples} 

(i) Take $K_0=\Q(\sqrt{-3})$
and $m=6$ to define $K$ by means of Kummer theory. Put 
$K=K_0(\sqrt[6]{u+v\,\sqrt{-3}})$, $u, v \in \Z$; then $K$ is defined by the
polynomial $P = x^{12}-2 u\,x^6+u^2+3v^2$ and $r:=r_2+1=7$. The program
verifies that $P$ is irreducible since $u, v$ are random.
The $p$-rationality is tested for $5 \leq p \leq 100$ and in most cases
the $p$-rationality holds giving many possibilities to illustrate the conjecture.

\smallskip
We give the list of exceptions obtained in an execution of the program 
(among about one hundred $u+v\,\sqrt{-3}$):

\smallskip
\footnotesize
\begin{verbatim}
{n=2;r=7;b=5;B=100;for(N=1,100,u=random(10^3);v=random(10^3);
P=x^12-2*u*x^6+u^2+3*v^2;if(polisirreducible(P)!=1,next);K=bnfinit(P,1);
forprime(p=b,B,Kpn=bnrinit(K,p^n);Hpn=component(component(Kpn,5),2);
L=listcreate;e=component(matsize(Hpn),2);R=0;for(k=1,e,
c=component(Hpn,e-k+1);if(Mod(c,p)==0,R=R+1;
listinsert(L,p^valuation(c,p),1)));if(R>r,
print(u," ",v," K is not ",p,"-rational"))))}
 u      v                               u      v
705    960     K is not 7-rational     351    750     K is not 37-rational
705    960     K is not 19-rational    286    682     K is not 7-rational
497    401     K is not 61-rational    60     298     K is not 7-rational
663    465     K is not 7-rational     56     789     K is not 7-rational
593    796     K is not 5-rational     677    538     K is not 7-rational
75     38      K is not 7-rational     884    54      K is not 11-rational
75     38      K is not 11-rational    646    177     K is not 7-rational
351    750     K is not 7-rational     25     130     K is not 13-rational
\end{verbatim}

\normalsize\smallskip
(ii) For the compositum $K$ of $K_0=\Q(\sqrt {-1})$ with the cyclic
extension of $\Q$ of degree $5$ and conductor $11$, we obtain that
$K$ is not $p$-rational for $p=761$, up to $10^4$.
To have $\Cl_K \ne 1$, it is necessary that ${\rm N}_{K/K_0}(\Cl_K)=
{\rm N}_{K/K_1}(\Cl_K)=1$, which explains the rarity of the 
non-$p$-rationalities:

\smallskip\footnotesize
\begin{verbatim}
{P=polcompositum(polsubcyclo(11,5),x^2+1);P=component(P,1);K=bnfinit(P,1);
b=2;B=10^4;r=component(component(component(K,7),2),2)+1;
forprime(p=b,B,n=2;if(p==2,n=3); Kpn=bnrinit(K,p^n);
Hpn=component(component(Kpn,5),2);e=component(matsize(Hpn),2); 
R=0;for(k=1,e,c=component(Hpn,e-k+1);if(Mod(c,p)==0,R=R+1));
if(R>r,print("rk(T)=",R-r," K is not ",p,"-rational")))}
\end{verbatim}

\normalsize
\subsection{Incomplete $p$-rationality}  As suggested by Hajir and Maire, 
some cases of ``incomplete $p$-ramification'' may be useful for some theoretical 
aspects when $p$ splits in $K$ in more than one prime ideal;
for instance, in the case of imaginary quadratic fields in which $p$ splits, 
one may consider the ray class field $K({\mathfrak p})$ of modulus 
${\mathfrak p} \mid p$ and class groups formulas in $K({\mathfrak p})$ 
with elliptic units, in the framework of the work of \cite{K} for one of the 
nine principal fields~$K$.\par

Then one can hope that the groups ${\mathcal T}^{(\ell)}_{K({\mathfrak p})}$ 
(for $\ell$-ramification theory with any prime $\ell$, 
the base field being $K({\mathfrak p})$ fixed, insted of $K$) 
can be interpreted with these analytic 
formulas and the corresponding question of the $\ell$-rationalities 
of $K({\mathfrak p})$ may be of some interest to generalize the 
classical abelian context.

\subsubsection{General definition of incomplete $p$-rationality}
Consider the general situation of a number field $K$ with any prime $p \geq 2$.
Let $P$ be a subset of the set of $p$-places of $K$ and let $H_K^{{\sst P}\rm ram}$
be the maximal abelian pro-$p$-extension of $K$, unramified outside~$P$; 
the corresponding formula is available in \cite[Theorems III.2.5 \& III.2.6]{Gr1}, 
stated in the ordinary sense, and gives for the torsion group
${\mathcal T}_{K}^{\sst P}$ of ${\rm Gal}(H_K^{{\sst P}\rm ram}/K)$:
\begin{equation}\label{partial}
\order {\mathcal T}_{K}^{\sst P} = \order {\rm tor}_{\Z_p}
\Big( \plus_{{\mathfrak p} \in P} U_{\mathfrak p}\Big / \ov {E_K}^{\sst P} \Big) \cdot
\big [H_K : H_K \cap \wt {K\ }^{\sst P} \big],
\end{equation}
where $\ov {E_K}^{\sst P}$ is the closure of the image of $E_K$ in 
$\bigoplus_{{\mathfrak p} \in P} U_{\mathfrak p}$ 
(i.e., the projection of $\ov E_K$ in this product over $P$) and
$\wt {K\ }^{\sst P}$ the compositum of the $\Z_p$-extensions 
contained in $H_K^{{\sst P}\rm ram}$.\par

If the second factor $\big [H_K : H_K \cap \wt {K\ }^{\sst P}\big ]$ may be controled, 
and is trivial in most cases, the first one is more tricky since
$\bigoplus_{{\mathfrak p} \in P} U_{\mathfrak p}$ is not a Galois module, 
but the following definition makes sense (under the Leopoldt conjecture):

\medskip
{\it Let $p$ be a prime number and let $P$ be a subset of the set of $p$-places of $K$; 
the field $K$ is said to be $P$-rational if ${\mathcal T}_{K}^{\sst P}=1$.}

\medskip
We note that Theorem \ref{thmfond} \& Corollary \ref{corofond} are still valid for 
a test of any incomplete $p$-rationality, using \cite[Theorem I.4.5 \& 
Corollary I.4.5.4]{Gr1} for a more general support $P$: the value of $n_0$ is
the same assuming $e_{\mathfrak p}=1$ for all ${\mathfrak p} \in P$ 
(otherwise the suitable modulus is 
$\prod_{{\mathfrak p} \in P} \,{\mathfrak p}^{e_{\mathfrak p} n_0}$).

\subsubsection{$\{{\mathfrak p}\}$-rationality for imaginary quadratic fields}
For an imaginary quadratic fields with $p$ splitted into
${\mathfrak p}\,{\mathfrak p}'$ and $P=\{{\mathfrak p}\}$
one gets:

\smallskip
\centerline{$\order {\mathcal T}_{K}^{\{{\mathfrak p}\}} = \order
\big (\mu_{K_{\mathfrak p}}^{}  / \mu_K^{}  \big)\cdot
[H_K : H_K \cap \wt {K\ }^{\{{\mathfrak p}\}}]$,}

where $\wt {K\ }^{\{{\mathfrak p}\}}$ is a $\Z_p$-extension. 

\smallskip
The following program gives the non-$\{{\mathfrak p}\}$-rationality
for quadratic fields $\Q(\sqrt {-d})$, $d \in [b, B]$ and $p \in [bp, Bp]$:

\smallskip
\footnotesize
\begin{verbatim}
{b=1;B=10^3;bp=2;Bp=10^4;for(dd=b,B,if(core(dd)!=dd,next);
d=-dd;P=x^2-d;K=bnfinit(P,1);forprime(p=bp,Bp,if(kronecker(d,p)!=1,next);
n=2;if(p==2,n=3);p1=component(component(idealfactor(K,p),1),1);
pn=idealpow(K,p1,n);Kpn=bnrinit(K,pn);Hpn=component(component(Kpn,5),2);
L=listcreate;e=component(matsize(Hpn),2);R=0;for(k=1,e,
c=component(Hpn,e-k+1);if(Mod(c,p)==0,R=R+1;
listinsert(L,p^valuation(c,p),1)));if(R>1,
print("d=",d," rk(T)=",R-1," K is not ",p1,"-rational ",L))))}
\end{verbatim}

\normalsize\smallskip
We get the following non-$\{{\mathfrak p}\}$-rational fields (in the above intervals):

\medskip
$d \in \{-33,-57,-65,-105,-119,-129,-145,-161,-177,-185,\ldots\,$

\hfill $\ldots, -935,-943,-959,-969,-985,-993\}$, for ${\mathfrak p} \mid 2$, \par

\smallskip
$d \in \{-107,-302,
-362,-419,-503,-509,-533,-602,-617,-713$, 

\hfill $-863,-974\}$, for ${\mathfrak p} \mid 3$,\par

\smallskip
$d \in \{-166,-439,-449,-479,-601,-611,-739, -761,-874\}$, for ${\mathfrak p} \mid 5$,\par

\smallskip
$d \in \{-374,-530,-794,-831,-859,-894\}$, for ${\mathfrak p} \mid 7$,\par

\smallskip
$d=-758$, for ${\mathfrak p} \mid 11$,
$d \in \{-458,-998\}$, for ${\mathfrak p} \mid 13$,
$d=-383$, for ${\mathfrak p} \mid 17$.

\begin{remarks} (i) In an imaginary quadratic field in which $p\ne 2$ splits, 
the $p$-rationality is equivalent to the 
$\{{\mathfrak p}\}$-rationality: indeed, one verifies that 
$\wt {K\ }^{\{{\mathfrak p}\}}  \wt {K\ }^{\{{\mathfrak p}'\}} \! = {\wt K}$,
$\ \wt {K\ }^{\{{\mathfrak p}\}} \!\cap \wt {K\ }^{\{{\mathfrak p}'\}} \!= {\wt K} \!\cap H_K$
(contained in the ``anti-cyclotomic'' $\Z_p$-extension of $K$); thus the 
$p$-rationality implies the $\{{\mathfrak p}\}$ and $\{{\mathfrak p}'\}$-rationalities,
the converse being obvious.

\smallskip
For instance, for $p=3$, $K=\Q(\sqrt{-491})$ where
$\Cl_K \simeq \Z/9\,\Z$, we have 
$H_K = \wt {K\ }^{\{{\mathfrak p}\}} \cap \wt {K\ }^{\{{\mathfrak p}'\}}$
(whence $3$-rationality); the class of ${\mathfrak q} \mid 11$ is of order $9$
and ${\mathfrak q}^9=\big (\frac{1}{2} (95595 + 773 \sqrt {-491})\big)$
with ${\rm Log}\big(\frac{1}{2} (95595 + 773 \sqrt {-491})\big) \equiv 3\,\sqrt{-491} \pmod {27}$
giving again all the rationalities as expected.

\smallskip
So we may use the above program computing the ray class 
group modulo ${\mathfrak p}^{n_0}$ which is much faster than 
the program for the ray class group modulo~$p^{n_0}$.

\smallskip
(ii) In the case $p=2$ splitted in $K$, besides the use of Program I (\S\,\ref{prog2}),
the computations may be handled with the use of the 
$2$-adic logarithm as in \cite[Example III.5.2.2]{Gr1} for $K=\Q(\sqrt{-15})$, showing 
its $\{{\mathfrak p}\}$-rationality despite the fact that $K$ is not $2$-rational.

\smallskip
(iii) Let $K/\Q$ be a Galois extension and $P$ as above; let $\Sigma$ be
the set of prime ideals ${\mathfrak p} \mid p$, ${\mathfrak p} \notin P$.
The field $H_K^{{\sst P}\rm ram}$ is the subfield of $H_K^{\rm pr}$
fixed by the image of $U_K^{\sst\Sigma} := 
\bigoplus_{{\mathfrak p} \in \Sigma} U_{\mathfrak p}^1$ 
(corresponding,  by class field theory, to the subgroup 
generated by the inertia groups) in $U_K/ \ov E_K$, thus 
$U_K^{\sst\Sigma} \cdot \ov E_K / \ov E_K \simeq
 U_K^{\sst\Sigma}/U_K^{\sst\Sigma} \cap \ov E_K$ (then 
${\rm Gal}(H_K^{{\sst P}\rm ram}/H_K) \simeq 
(U_K /\ov E_K) \big / (U_K^{\sst\Sigma} \cdot \ov E_K / \ov E_K)$
is isomorphic to $U_K^{\sst P}/ \ov{E_K}^{\sst P}$ as expected 
for the formula \eqref{partial} giving $\order {\mathcal T}_{K}^{\sst P}$).

\smallskip
We have given generalization of Leopoldt conjecture in this
incomplete framework and conjectured a formula for the
$\Z_p$-rank of $U_K^{\sst\Sigma} \cap \ov E_K$ which is the set of 
$\ov \varepsilon \in \ov E_K$ in $U_K$ whose component on 
$U_K^{\sst P}$ is trivial (see \cite[Strong $p$-adic conjecture, III\,(f), 
Remarks 4.11.2, 4.11.4, 4.12.1, 4.12.3]{Gr1} for detailed statements; 
a $2013$ arXiv publication, by Dawn C. Nelson, discovers again the 
same kind of results: \url{https://arxiv.org/abs/1308.4637}).
\end{remarks}

\section{Conclusion}

\baselineskip=12.0pt
As we have seen, Galois number fields $K$ containing units of infinite
order are in general $p$-rational for ``almost all $p \gg 0$''
despite the fact that we have no information on ${\mathcal R}_K$ 
modulo $p$ for $p \to \infty$.
More precisely, probabilities are given (in an heuristic approach) 
by means of $p$-adic representation theory and the nature of 
$p$-adic characters of ${\rm Gal}(K/\Q)$, via factorization of 
Frobenius determinants (see \cite[Heuristique Principale, 
\S\,4.2.2]{Gr3} about the more general question of an algebraic number). 

\smallskip
The case of small primes $p$ is different because of local 
$p$th roots of unity and $p$-class groups. For instance, 
the invariant ${\mathcal W}_K = \big(\bigoplus_{{\mathfrak p} \mid p} 
\mu_{K_{\mathfrak p}}^{} \big)\big / \mu_K^{}$,
depending on the splitting of $p$ in $K(\mu^{}_p)/\Q$,
may be non-trivial (e.g., the most common case $p=2$).

\smallskip
For a Galois number field, with sufficiently ramified primes 
and $p \mid [K : \Q]$, the rank of the $p$-class group may be 
an obstruction to the $p$-rationality because of ``genera theory''
in $K/\Q$; the case where $p \nmid [K : \Q]$ leads also to an obstruction
as soon as ${\rm rk}_p(\Cl_K) > r = r_2+1$ and this fact is rather strange
in a probabilistic point of view. If $K$ is real and if $p \notdiv [K : \Q]$
then $\Cl_K^\infty = \Cl_K$ since $\wt K$
is the cyclotomic $\Z_p$-extension, totally ramified at $p$.

\smallskip
However, for $K$ fixed and $p \gg 0$, one gets $\Cl_K^\infty={\mathcal W}_K=1$
and the most deep and mysterious invariant about $p$-rationality is 
the normalized $p$-adic regulator which then becomes, for $K$ real and 
$p$ unramified, ${\mathcal R}_K = \frac{1}{p^{r_1+r_2-1}} \,R_K$
($R_K$ being the usual $p$-adic regulator \cite[\S\,5.5]{W}).

\smallskip
In terms of $p$-rationality, the real case may be written in a conjectural 
way as follows (for more information and generalizations to Jaulent's conjecture
replacing $E_K$ by the $G$-module generated by an
algebraic number $\eta$, see \cite[Theorem 1.1 \& Heuristic 7.4]{Gr3}):

\begin{conjecture}
{\it Let $K/\Q$ be a real Galois extension of Galois group $G$ and
let $\eta \in E_K$ be a Minkowski unit (i.e., a unit generating a 
sub-$G$-module of finite index of $E_K$).
The probability of ${\mathcal R}_K  \equiv 0 \pmod p$ is at most:
$$\hbox{$\ds \frac{1}{p^{ {\rm log}_2(p)/ {\rm log}(c_0(\eta))-O(1) }}\,,$
for $p\to \infty$, }$$
where $c_0(\eta) = {\rm max}_{\sigma \in G} (\vert \eta^\sigma \vert)$, 
and ${\rm log}_2={\rm log} \circ {\rm log}$.

\smallskip  
Under the principle of Borel--Cantelli, the number of primes $p$ such that 
$K$ is non-$p$-rational is finite. }
\end{conjecture}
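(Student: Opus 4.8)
The plan is to make $\mathcal R_K\bmod p$ completely explicit as a determinant of $p$-adic logarithms, to run the Borel--Cantelli lemma against a per-prime probability estimate, and to isolate that estimate as the one genuinely open input. First I would combine the exact sequence defining $\mathcal R_K$ with the fact that, for $K$ real and $p\gg0$, one has $\mathcal W_K=\Cl_K^\infty=1$, so that non-$p$-rationality becomes equivalent to $p\mid\#\mathcal R_K$. Writing $\mathcal R_K=\mathrm{tor}_{\Z_p}\big(\log(U_K)/\log(\overline E_K)\big)$ and using that $\overline E_K$ is, up to prime-to-$p$ index, the $\Z[G]$-module generated by the Minkowski unit $\eta$, the normalization $\mathcal R_K=\frac1{p^{\,r_1+r_2-1}}R_K$ identifies the event with the vanishing modulo $p$ of the determinant $\det\big(\tfrac1p\log_p(\eta^{\sigma_i})\big)$. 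As in the Hajir--Maire analysis above I would then diagonalize over the $p$-adic characters $\theta\neq1$ of $G$, so that $\mathcal R_K\equiv0\pmod p$ holds precisely when some factor $\mathrm{Reg}_p^\varphi(\eta)$ --- a fixed $C_\chi$-linear combination of the $\tfrac1p\log_p(\eta^\sigma)$ --- reduces to $0$.

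Next I would set up the probabilistic model and graft onto it the archimedean refinement. The residues $\tfrac1p\log_p(\eta^\sigma)\bmod p$ are Fermat-quotient-type quantities, and the naive model in which they are equidistributed yields only the per-character bound $O(1)/p^{f}$ recalled above, hence $O(1)/p$ overall; this forces density $1$ for $p$-rationality but is far too weak for finiteness. The decisive ingredient is the boundedness of the height of $\eta$, encoded in $c_0(\eta)=\max_{\sigma}|\eta^\sigma|$: following the Principal Heuristic of \cite[\S\,4.2.2]{Gr3}, the archimedean size of $\eta$ constrains the $p$-adic complexity of the numbers $\log_p(\eta^\sigma)$, so that killing a single $\theta$-factor modulo $p$ costs not one congruence but about $\log_2(p)/\log c_0(\eta)$ effectively independent ones. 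This is what upgrades the per-prime probability, uniformly in $\chi$, to $p^{-(\log_2(p)/\log c_0(\eta)-O(1))}$.

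The hard part will be exactly this last step: proving that the Fermat-quotient reductions attached to $\eta$ behave with enough independence, and with a discrepancy genuinely governed by $\log c_0(\eta)$, to replace the benign exponent $f$ by the growing exponent $\log_2(p)/\log c_0(\eta)$. No unconditional statement of this strength is in reach --- even the finiteness of Wieferich-type exceptions for a single rational base is open --- which is precisely why the assertion is stated as a conjecture rather than a theorem; Leopoldt's conjecture (assumed throughout) only guarantees that each $\mathrm{Reg}_p^\varphi(\eta)\neq0$, so that the event is well posed, while $p$-adic lower bounds for linear forms in logarithms confirm at most a polynomially large $p$-valuation and thus fall far short of the exponential-type saving demanded here. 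Granting the per-prime estimate, however, the conclusion is immediate: since $\log_2(p)/\log c_0(\eta)\to\infty$, the exponent exceeds $2$ for all large $p$, so $\sum_p p^{-(\log_2(p)/\log c_0(\eta)-O(1))}$ is dominated by a convergent series $\sum_p p^{-2}$, and the Borel--Cantelli lemma yields only finitely many primes $p$ with $\mathcal R_K\equiv0\pmod p$; with the equivalence of the first paragraph this gives finitely many non-$p$-rational primes for $K$.
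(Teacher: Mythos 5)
This statement is a conjecture, so the paper offers no proof of it — it only points to the heuristic framework of \cite[Theorem 1.1 \& Heuristic 7.4]{Gr3} (factorization of ${\mathcal R}_K$ as a Frobenius determinant into $\theta$-components, per-prime probabilities governed by the archimedean size $c_0(\eta)$, and a Borel--Cantelli deduction), and your proposal reconstructs exactly this chain: reduction of non-$p$-rationality for $p \gg 0$ to $p \mid \order {\mathcal R}_K$ via the triviality of ${\mathcal W}_K$ and $\Cl_K^\infty$, diagonalization over the $p$-adic characters of $G$, the height-controlled upgrade of the naive $O(1)/p^f$ bound, and the convergence of the resulting series. You also correctly identify that the per-prime estimate (replacing the exponent $f$ by ${\rm log}_2(p)/{\rm log}(c_0(\eta))$) is the genuinely conjectural input with no unconditional proof in reach — which is precisely the paper's own position — so your account is a faithful match of the paper's (heuristic) justification rather than a divergence from it.
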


\subsection*{Acknowledgments} My thanks to Christian Maire for many 
exchanges and discussions about $p$-rationality, and to Jean-Fran\c cois 
Jaulent about the Bertrandias--Payan module for $p=2$.

\end{document}